\newtheorem{theorem}{Proposition}
\newtheorem{remark}{Remark}
\newacronym{ecms}{ECMS}{equivalent consumption minimization strategy}
\newacronym{ghg}{GHG}{greenhouse gas}
\newacronym{imo}{\textit{IMO}}{\textit{International Maritime Organization}}
\newacronym{kt}{K\textsubscript{T}}{thrust coefficient}
\newacronym{kq}{K\textsubscript{Q}}{torque coefficient}
\newacronym{lhs}{LHS}{left-hand side}
\newacronym{milp}{MILP}{mixed integer linear program}
\newacronym{minlp}{MINLP}{mixed integer non-linear program}
\newacronym{mpc}{MPC}{model predictive control}
\newacronym{nlp}{NLP}{nonlinear program}
\newacronym{nox}{NOx}{nitrogen oxide}
\newacronym{ocv}{OCV}{open circuit voltage}
\newacronym{owd}{OWD}{open water diagram}
\newacronym{pm2.5}{PM\textsubscript{2.5}}{fine particulate matter}
\newacronym{poly2}{poly2}{second-order polynomial fit}
\newacronym{poly3}{poly3}{third-order polynomial fit}
\newacronym{soc}{SOC}{state of charge}
\newacronym{sox}{SOx}{sulphur oxide}
\newacronym{1dof}{1-DOF}{one-degree-of-freedom}
\newacronym{1q}{1Q}{one-quadrant}
\newacronym{4q}{4Q}{four-quadrant}
\renewcommand{\vec}[1]{\mathrm{\underline{#1}}}
\newglossaryentry{a_c}{
	name={\ensuremath{a_{\mathrm{c}}}},
    user1={\ensuremath{a_{\mathrm{c,0}}}},
    user2={\ensuremath{a_{\mathrm{c,1}}}},
	description={coefficient of the converter loss function, $i \in {0, 1}$},
	sort={ag}, 
	type={symbolslist}
}
\newglossaryentry{a_Q}{
	name={\ensuremath{\tilde{a}_{\mathrm{Q,}i}}},
    user1={\ensuremath{\tilde{a}_{\mathrm{Q,0}}}},
    user2={\ensuremath{\tilde{a}_{\mathrm{Q,1}}}},
    user3={\ensuremath{\tilde{a}_{\mathrm{Q,2}}}},
	description={coefficient of the torque coefficient approximation function, $i \in {0, 1, 2}$},
	sort={aq}, 
	type={symbolslist}
}
\newglossaryentry{a_T}{
	name={\ensuremath{\tilde{a}_{\mathrm{T,}i}}},
    user1={\ensuremath{\tilde{a}_{\mathrm{T,0}}}},
    user2={\ensuremath{\tilde{a}_{\mathrm{T,1}}}},
    user3={\ensuremath{\tilde{a}_{\mathrm{T,2}}}},
	description={coefficient of the thrust coefficient approximation function, $i \in {0, 1, 2}$},
	sort={at}, 
	type={symbolslist}
}
\newglossaryentry{b}{
	name={\ensuremath{b}},
    user1={\ensuremath{b'}},
    user2={\ensuremath{b''}},
	description={auxiliary variable for squared vessel speed},
	sort={b}, 
	type={symbolslist}
}
\newglossaryentry{D_p}{
	name={\ensuremath{D_{\mathrm{p}}}},
	description={diameter of propeller},
	sort={Dp}, 
	type={symbolslist}
}
\newglossaryentry{dE_bat}{
	name={\ensuremath{\Delta E_{\mathrm{batt}}}},
	description={change in battery energy},
	sort={debat}, 
	type={symbolslist}
}
\newglossaryentry{dE_batmax}{
	name={\ensuremath{\Delta E_{\mathrm{batt,min/max}}}},
    user1={\ensuremath{\Delta E_{\mathrm{batt,min}}}},
    user2={\ensuremath{\Delta E_{\mathrm{batt,max}}}},
	description={limits for the change in battery energy},
	sort={debatmax}, 
	type={symbolslist}
}
\newglossaryentry{dE_em}{
	name={\ensuremath{\Delta E_{\mathrm{EM}}}},
	description={change in electric machine input energy},
	sort={deem}, 
	type={symbolslist}
}
\newglossaryentry{dE_p}{
	name={\ensuremath{\Delta E_{\mathrm{p}}}},
	description={change in propulsive energy},
	sort={dep}, 
	type={symbolslist}
}
\newglossaryentry{E_bat}{
	name={\ensuremath{E_{\mathrm{batt}}}},
	description={battery energy},
	sort={Ebat}, 
	type={symbolslist}
}
\newglossaryentry{E_batmax}{
	name={\ensuremath{E_{\mathrm{batt,max}}}},
	description={maximum battery energy},
	sort={Ebatmax}, 
	type={symbolslist}
}
\newglossaryentry{F_T}{
	name={\ensuremath{F_{\mathrm{T}}}},
	description={force of propeller thrust},
	sort={FT}, 
	type={symbolslist}
}
\newglossaryentry{F_D}{
	name={\ensuremath{F_{\mathrm{D}}}},
	description={total drag},
	sort={FD}, 
	type={symbolslist}
}
\newglossaryentry{F_H}{
	name={\ensuremath{F_{\mathrm{H}}}},
	description={hydrodynamic hull force from drift},
	sort={FH}, 
	type={symbolslist}
}
\newglossaryentry{F_P}{
	name={\ensuremath{F_{\mathrm{P}}}},
	description={rotational damping force},
	sort={FP}, 
	type={symbolslist}
}
\newglossaryentry{F_R}{
	name={\ensuremath{F_{\mathrm{R}}}},
	description={rudder steering force},
	sort={FR}, 
	type={symbolslist}
}
\newglossaryentry{D_R}{
	name={\ensuremath{D_{\mathrm{R}}}},
	description={rudder drag force},
	sort={DR}, 
	type={symbolslist}
}
\newglossaryentry{F_bat}{
	name={\ensuremath{F_{\mathrm{batt}}}},
	description={purely fictive "force" representing derivative of  battery energy with regard to path parameter},
	sort={Fbat}, 
	type={symbolslist}
}
\newglossaryentry{F_batd}{
	name={\ensuremath{F_{\mathrm{batt,d}}}},
	description={purely fictive "force" representing derivative of battery losses with regard to path parameter},
	sort={Fbatd}, 
	type={symbolslist}
}
\newglossaryentry{F_bati}{
	name={\ensuremath{F_{\mathrm{batt,i}}}},
	description={purely fictive "force" representing derivative of internal battery energy with regard to path parameter},
	sort={Fbati}, 
	type={symbolslist}
}
\newglossaryentry{F_em}{
	name={\ensuremath{F_{\mathrm{EM}}}},
	description={purely fictive "force" representing derivative of electric machine energy with regard to path parameter},
	sort={Fem}, 
	type={symbolslist}
}
\newglossaryentry{F_fc}{
	name={\ensuremath{F_{\mathrm{fc}}}},
	description={purely fictive "force" representing derivative of fuel cell energy with regard to path parameter},
	sort={Ffc}, 
	type={symbolslist}
}
\newglossaryentry{F_fci}{
	name={\ensuremath{F_{\mathrm{fc,i}}}},
	description={purely fictive "force" representing derivative of internal fuel cell energy with regard to path parameter},
	sort={Ffci}, 
	type={symbolslist}
}
\newglossaryentry{F_c}{
	name={\ensuremath{F_{\mathrm{c}}}},
	description={purely fictive "force" representing derivative of converter energy with regard to path parameter},
	sort={Fg}, 
	type={symbolslist}
}
\newglossaryentry{F_ci}{
	name={\ensuremath{F_{\mathrm{c,i}}}},
	description={purely fictive "force" representing derivative of internal converter energy with regard to path parameter},
	sort={Fgi}, 
	type={symbolslist}
}
\newglossaryentry{F_dE_p}{
	name={\ensuremath{F_{\Delta \mathrm{E}_{\mathrm{p}}}}},
	description={purely fictive "force" representing derivative of propeller input energy with regard to path parameter},
	sort={Fp}, 
	type={symbolslist}
}
\newglossaryentry{f_t}{
	name={\ensuremath{f_{\mathrm{t}}}},
	description={thrust deduction factor},
	sort={ft}, 
	type={symbolslist}
}
\newglossaryentry{f_w}{
	name={\ensuremath{f_{\mathrm{w}}}},
	description={wake fraction coefficient},
	sort={fw}, 
	type={symbolslist}
}
\newglossaryentry{I}{
	name={\ensuremath{I_{\mathrm{0}}}},
	description={moment of inertia},
	sort={I}, 
	type={symbolslist}
}
\newglossaryentry{I_add}{
	name={\ensuremath{\tilde{I}}},
	description={added moments of inertia},
	sort={Iadd}, 
	type={symbolslist}
}
\newglossaryentry{I_w}{
	name={\ensuremath{I_{\mathrm{w}}}},
	description={moment of inertia of displaced water},
	sort={Iw}, 
	type={symbolslist}
}
\newglossaryentry{J}{
	name={\ensuremath{J}},
	description={advance coefficient},
	sort={J}, 
	type={symbolslist}
}
\newglossaryentry{k_c}{
	name={\ensuremath{k_{\mathrm{c}}}},
	description={number of general energy converters turned on as a function of the path variable},
	sort={kc}, 
	type={symbolslist}
}
\newglossaryentry{k_dEp}{
	name={\ensuremath{k_{\mathrm{dE}_\mathrm{p},i}}},
    user1={\ensuremath{k_{\mathrm{dE}_\mathrm{p},0}}},
    user2={\ensuremath{k_{\mathrm{dE}_\mathrm{p},1}}},
    user3={\ensuremath{k_{\mathrm{dE}_\mathrm{p},2}}},
	description={coefficient of the thrust coefficient approximation function, $i \in {0, 1, 2}$},
	sort={at}, 
	type={symbolslist}
}
\newglossaryentry{L_H}{
	name={\ensuremath{L_{\mathrm{H}}}},
	description={arm for hydrodynamic hull force},
	sort={LH}, 
	type={symbolslist}
}
\newglossaryentry{L_P}{
	name={\ensuremath{L_{\mathrm{P}}}},
	description={arm for damping force},
	sort={LP}, 
	type={symbolslist}
}
\newglossaryentry{L_R}{
	name={\ensuremath{L_{\mathrm{R}}}},
	description={arm for rudder steering force},
	sort={LR}, 
	type={symbolslist}
}
\newglossaryentry{M}{
	name={\ensuremath{\vec{M}}},
	description={mass and inertia matrix},
	sort={M}, 
	type={symbolslist}
}
\newglossaryentry{m_add}{
	name={\ensuremath{\tilde{m}}},
	description={added mass of the vessel, m $\cdot k_1$},
	sort={madd}, 
	type={symbolslist}
}
\newglossaryentry{m_vessel}{
	name={\ensuremath{m}},
	description={mass of the vessel},
	sort={m}, 
	type={symbolslist}
}
\newglossaryentry{n_p}{
	name={\ensuremath{\tilde{n}_{\mathrm{p}}}},
	description={auxiliary variable for propeller shaft speed},
	sort={np}, 
	type={symbolslist}
}
\newglossaryentry{P_aux}{
	name={\ensuremath{P_{\mathrm{aux}}}},
	description={auxiliary loads during vessel operation},
	sort={Paux}, 
	type={symbolslist}
}
\newglossaryentry{P_batmax}{
	name={\ensuremath{P_{\mathrm{batt,dis/cha,max}}}},
    user1={\ensuremath{P_{\mathrm{batt,dis,max}}}},
    user2={\ensuremath{P_{\mathrm{batt,cha,max}}}},
	description={maximum battery dis-/charge power},
	sort={Pbatmax}, 
	type={symbolslist}
}
\newglossaryentry{P_fcmax}{
	name={\ensuremath{P_{\mathrm{fc,min/max}}}},
    user1={\ensuremath{P_{\mathrm{fc,min}}}},
    user2={\ensuremath{P_{\mathrm{fc,max}}}},
	description={minimum/maximum output power of the fuel cell},
	sort={Pfcmax}, 
	type={symbolslist}
}
\newglossaryentry{P_gi}{
	name={\ensuremath{P_{\mathrm{g,i},k}}},
	description={internal generator power},
	sort={Pgi}, 
	type={symbolslist}
}
\newglossaryentry{q}{
	name={\ensuremath{\vec{q}}},
    user1={\ensuremath{q}},
	description={state vector of the vessel model},
	sort={q}, 
	type={symbolslist}
}
\newglossaryentry{Q_p}{
	name={\ensuremath{Q_{\mathrm{p}}}},
	description={torque of propeller},
	sort={Qp}, 
	type={symbolslist}
}
\newglossaryentry{R}{
	name={\ensuremath{\vec{R}}},
	description={rotation matrix},
	sort={R}, 
	type={symbolslist}
}
\newglossaryentry{rho_sw}{
	name={\ensuremath{\rho_{\mathrm{sw}}}},
	description={density of seawater},
	sort={rhosw}, 
	type={symbolslist}
}
\newglossaryentry{s}{
	name={\ensuremath{\vec{s}}},
    user1={\ensuremath{\vec{s}'}},
    user2={\ensuremath{\vec{s}''}},
    user3={\ensuremath{s}},
    user4={\ensuremath{s'}},
    user5={\ensuremath{s''}},
	description={path of the vessel as a function of the path coordinate},
	sort={s}, 
	type={symbolslist}
}
\newglossaryentry{sigma}{
	name={\ensuremath{\sigma}},
	description={path coordinate},
	sort={rhosw}, 
	type={symbolslist}
}
\newglossaryentry{soc_max}{
	name={\ensuremath{\mathrm{SOC}_{\mathrm{min/max}}}},
    user1={\ensuremath{\mathrm{SOC}_{\mathrm{min}}}},
    user2={\ensuremath{\mathrm{SOC}_{\mathrm{max}}}},
	description={minimum/maximum state of charge},
	sort={socmax}, 
	type={symbolslist}
}
\newglossaryentry{T_p}{
	name={\ensuremath{T_{\mathrm{p}}}},
	description={thrust of propeller},
	sort={Tp}, 
	type={symbolslist}
}
\newglossaryentry{u}{
	name={\ensuremath{\vec{u}}},
    user1={\ensuremath{\vec{\tilde{u}}}},
	description={control input vector},
	sort={u}, 
	type={symbolslist}
}
\newglossaryentry{v_a}{
	name={\ensuremath{v_{\mathrm{a}}}},
	description={advance speed},
	sort={va}, 
	type={symbolslist}
}
\newglossaryentry{v_s}{
	name={\ensuremath{v_{\mathrm{s}}}},
	description={vessel speed},
	sort={vs}, 
	type={symbolslist}
}
\newglossaryentry{w_t}{
	name={\ensuremath{\omega_{\mathrm{T}}}},
	description={weighting factor for time in objective function},
	sort={wt}, 
	type={symbolslist}
}
\newglossaryentry{y_t}{
	name={\ensuremath{y_{\mathrm{t}}}},
	description={auxiliary variable for the derivative of the time with regard to the path variable},
	sort={yt}, 
	type={symbolslist}
}
\newglossaryentry{z}{
	name={\ensuremath{z}},
	description={auxiliary variable for the product of speed and shaft speed},
	sort={z}, 
	type={symbolslist}
}
\newcommand{\reals}{\mathbb{R}}
\newcommand{\naturals}{\mathbb{N}}
\newcommand{\figref}[1]{Figure~\ref{#1}}
\newcommand{\appref}[1]{Appendix~\ref{#1}}
\newcommand{\lMultIneq}[1]{\lambda_{\mathrm{#1}}}
\newcommand{\lMultDiff}[1]{\psi_{\mathrm{#1}}}
\newcommand{\rowVecTrans}[1]{\begin{bmatrix}#1\end{bmatrix}^\top}
\def\tsc#1{\csdef{#1}{\textsc{\lowercase{#1}}\xspace}}
\begin{document}
\let\WriteBookmarks\relax
\def\floatpagepagefraction{1}
\def\textpagefraction{.001}

\shorttitle{Integrated supervisory control and fixed path speed trajectory generation}    

\shortauthors{Ritari, Katzenburg, Oliveira, Tammi}  

\title [mode = title]{Integrated supervisory control and fixed path speed trajectory generation for hybrid electric ships via convex optimization}  




\author[1]{Antti Ritari}[orcid=0000-0002-6883-1447]
\cormark[1]
\fnmark[1]
\ead{antti.ritari@aalto.fi}
\address[1]{School of Engineering, Department of Mechanical Engineering, Aalto University, Otakaari 4, 02150 Espoo, Finland}

\author[1]{Niklas Katzenburg}[orcid=0009-0001-7878-0866]
\fnmark[1]
\ead{niklas.katzenburg@alumni.aalto.fi}

\author[2]{Fabricio Oliveira}[orcid=0000-0003-0300-9337]
\ead{fabricio.oliveira@aalto.fi}
\author[1]{Kari Tammi}[orcid=0000-0001-9376-2386]
\ead{kari.tammi@aalto.fi}

\address[2]{School of Science, Department of Mathematics and Systems Analysis, Aalto University, Otakaari 1, 02150 Espoo, Finland}

\cortext[1]{Corresponding author}

\fntext[1]{Equal contribution}


\begin{abstract}
Battery-hybrid power source architectures can reduce fuel consumption and emissions for ships with diverse operation profiles. 
However, conventional control strategies may fail to improve performance if the future operation profile is unknown to the controller. 
This paper proposes a guidance, navigation, and control (GNC) function that integrates trajectory generation and hybrid power source supervisory control. 
We focus on time and fuel optimal path-constrained trajectory planning. This problem is a nonlinear and nonconvex optimal control problem, which means that it is not readily amenable to efficient and reliable solution onboard. 
We propose a nonlinear change of variables and constraint relaxations that transform the nonconvex planning problem into a convex optimal control problem. 
The nonconvex three-degree-of-freedom dynamics, hydrodynamic forces, fixed pitch propeller, battery, and general energy converter (e.g., fuel cell or generating set) dissipation constraints are expressed in convex functional form.
A condition derived from Pontryagin's Minimum Principle guarantees that, when satisfied, the solution of the relaxed problem provides the solution to the original problem.
The validity and effectiveness of this approach are numerically illustrated for a battery-hybrid ship in model scale. 
First, the convex hydrodynamic hull and rudder force models are validated with towing tank test data. 
Second, optimal trajectories and supervisory control schemes are evaluated under varying mission requirements. 
The convexification scheme in this work lays the path for the employment of mature, computationally robust convex optimization methods and creates a novel possibility for real-time optimization onboard future smart and unmanned surface vehicles. 
\end{abstract}

\begin{keywords}
 convex optimization\sep
 fixed pitch propeller\sep
 optimal control\sep
 energy management strategy\sep
 fixed path trajectory generation\sep
 fuel cell hybrid
\end{keywords}

\maketitle

\section{Introduction}
\textbf{Emission abatement drivers.} Rising environmental awareness and international regulations such as the Paris Agreement, which aims to limit “global average temperature to well below 2~°C above pre-industrial levels” \cite{unite_ParisAgreement_2016}, require all industries to take serious efforts to decrease their \gls{ghg} emissions.
Thus, the \gls{imo} adopted the \textit{Initial \gls{imo} Strategy on reduction of \gls{ghg} emissions from ships} in April 2018.
Within this resolution, the \gls{imo} confirms the contribution of international shipping to the goals set in the Paris Agreement.
They strive for a \gls{ghg} emission reduction between 50\% and 70\% by 2050 compared to 2008 \cite{imo_NoteInternationalMaritime_2018}.

Vessels do not only emit \gls{ghg} though but also other pollutants such as \gls{pm2.5}, \glspl{sox} and \glspl{nox}, which impact climate and human health \cite{sofie_CleanerFuelsShips_2018}.
According to \cite{smith_ThirdIMOGHG_2015}, international shipping causes approximately 12\% of annual, global \gls{sox} emissions based on the average values between 2007 and 2012.
Without taking any actions, the authors in \cite{sofie_CleanerFuelsShips_2018} suggest that shipping emissions will lead to approximately 14 million childhood asthma cases – about 16\% of the total estimated 86 million childhood asthma cases – and a total of 403,300 annual premature adult deaths.
Densely populated coastal regions – especially in developing and least developed countries – are the most affected \cite{sofie_CleanerFuelsShips_2018}.

\textbf{Hybrid architectures.} Conventional direct-driven mechanical propulsion exhibits poor efficiency when the vessel operating point lies outside the optimized design point, which increases fuel consumption and emissions. Advanced powertrain configurations based on electrical propulsion, hybrid combination of mechanical and electrical propulsion, and power generation combined with electrical energy storage can deliver the needed adaptability and efficiency.

Electrical propulsion and azimuth thrusters are now the standard architecture in cruise ships that frequently maneuver in ports; a hybrid power source with li-ion batteries as an energy buffer can provide spinning reserve and zero-emission sailing for coastal ferries under strict safety requirements and sensitivity to emissions due to the route's proximity to habitation \cite{ritar_HybridElectricTopology_2020}. Similarly, offshore vessels alternating between transit and dynamic positioning operations benefit from battery spinning reserve; mechanical propulsion with power take-in and take-out function from the parallel electric machine has been shown to reduce emissions and increase the performance of naval vessels performing a combination of patrol and littoral operations \cite{geert_PitchControlShips_2017}.

Fuel cell hybrid power source is a promising solution for countering vessel \gls{ghg} emissions and pollutants, at least locally. Fuel cells convert the chemical energy of fuels to electricity by oxidation and reduction reactions. As fuel cells are not heat engines, they are not bounded by the thermodynamic constraints expressed by Carnot’s law \cite{BMT+20}. The theoretical maximum conversion efficiency is higher compared to internal combustion engines, which lowers fuel consumption and emissions. Proton exchange membrane and solid oxide fuel cells have been identified as the most promising for shipping applications \cite{emsa17}. Load leveling and peak shaving functions provided by an energy buffer are essential in fuel cell power generation configuration for extending the useful lifetime and mitigating the limited power output ramp rate of the stacks.

\textbf{Traditional energy management strategies.} The hybrid configurations introduce a control task that consists of determining the setpoints of the various power converters that constituting the powertrain. The control task is called energy management, supervisory control, or tertiary control in relation to the hierarchical control stack, which also includes primary and secondary control \cite{geert_DesignControlHybrid_2017}. The notion of optimal energy management strategy refers to the exploitation of the degrees of freedom in control to minimize a set of criteria that typically represents fuel consumption or pollutant emissions. An important and challenging characteristic in the charge-sustaining battery-hybrid energy management problem is the constraint that requires the State of Charge (SoC) at the end of the voyage to take a value close to its initial value.

All high-performing energy management strategies follow the principle that the internal combustion engine should be operated at favourable conditions at relatively high loads. The traditional strategies aim to achieve this goal by calculating the power converter set points from rules as a function of various measured vessel quantities. Recently, following the success in the automotive sector, optimal control theory and, in particular, Pontryagin's Minimum Principle (PMP) has been investigated in hybrid vessel control as well \cite{kalik_ShipEnergyManagement_2018}.

\textbf{Motion planning.} Both rule-based and PMP energy management strategies aim to fulfill the operator's request that is unknown in advance to the controller. In unmanned surface vehicles the propulsion and power generation are considered distinct subsystems that receive actuator commands from the guidance, navigation, and control (GNC) system \cite{ZYX+16}. Regardless of manned or unmanned guidance, this function must continuously generate smooth and feasible trajectories for references to the power and propulsion controllers. Information provided by the navigation system, voyage plan, vessel capability, and environmental conditions defines the set of feasible trajectories. The role of path planning and trajectory generation in energy-efficient vessel operation is highlighted by \gls{imo} \cite{imo_2016GuidelinesDevelopment_2016}. The vessel dynamics are emphasized for short sea vessels due to the short overall voyage length and the acceleration and deceleration phases. These phases are especially important in areas with speed limits like archipelagos and close proximity to harbors.

Engine efficiency curve, propeller characteristics, and other power generation-related factors have been recognized to influence the fuel and voyage time optimal trajectory \cite{imo_2016GuidelinesDevelopment_2016, dnvg_MARITIMEFORECAST2050_2019}. Nevertheless, the synthesis of GNC with hybrid power and propulsion architecture energy management has hardly been investigated yet despite the promise of improved performance. The reason behind this fact is associated with the challenging scale and computational complexity of the nonlinear optimal control problem that arises from the vessel dynamics and propulsor characteristics. Although the optimal solution can be approximated by the dynamic programming method \cite{frang_DevelopmentsTrendsChallenges_2020}, it is unsuitable for real-time applications due to the ``curse of dimensionality'' – i.e., the computational effort rises exponentially with the number of states \cite{bohme_HybridSystemsOptimal_2017}. Thus, the requirement for real-time control and autonomous decision-making rule out dynamic programming from consideration. 

The objective of this study is to develop a framework for synthesizing the trajectory simultaneously with the energy management strategy. The performance of this framework concerns not only optimality in terms of voyage time, fuel consumption, or pollutant emission, but also implementability. The latter criterion refers to the reliability, the memory footprint of the computational implementation, and the processor runtime. 

We focus on a trajectory generation problem that takes a previously computed path as input, including spatial constraints (e.g., speed limits). 
A higher-level guidance path planning algorithm may have generated a set of feasible alternative paths between waypoints. These alternatives must be evaluated by generating an optimal trajectory and energy management strategy for each alternative.

\textbf{Convex optimization.} To achieve the required efficiency and reliability for real-time application, we propose formulating the integrated control problem as a convex optimization problem. Convex optimization has become a mature technology, being applied in several fields of engineering and science during the last two decades \cite{hobur_GeometricProgrammingAircraft_2014}.
Examples include – but are not limited to - aircraft design \cite{hobur_GeometricProgrammingAircraft_2014}, hybrid electric vehicles \cite{murgo_ConvexRelaxationsOptimal_2015}, \textit{Formula 1} cars \cite{ebbes_TimeoptimalControlStrategies_2018} and planetary soft landing \cite{acikm_LosslessConvexificationNonconvex_2013}.

Convex optimization problems exhibit a number of favourable properties. Any locally optimal solution is also globally optimal. Infeasibility can be detected unambiguously, i.e., the problem cannot be solved with the given constraints.
Iterative algorithms for solving convex optimization problems self-initialize, which means that they require neither any initial values nor parameter tuning. There also exists a deterministic bound on the number of arithmetic operations needed to solve a convex optimization problem to within any desired accuracy \cite{boyd_ConvexOptimization_2004}. These special properties motivate the use of convex optimization in applications that require fast and reliable autonomous decision making capability.

However, the benefits of convex optimization come at a price: the mathematical model that describes the physical relations must be expressed within the restricted functional forms that yield a feasible convex set. 
The challenge arises from formulating the prevailing physics-based models of hull forces, propulsor thrust and torque, power converters and battery energy storage in this restricted functional form without sacrificing consistency with the first-principles high-fidelity nonconvex models.

In this work, the challenge of ``convexifying'' the original nonconvex problem is met and overcome by formulating a convex three-degree-of-freedom vessel dynamics model. The dynamics are convexified exactly, meaning that we provide a novel equivalent convex formulation. Convex expressions are introduced for hull forces, the fixed pitch propeller, the battery system and converters (internal combustion engine or fuel cell modules).

\textbf{Contribution.} To the authors' knowledge, this model formulation includes three major novelties for the maritime sector.
First, the model is formulated in the spatial domain to convexify the vessel dynamics and take the distance-dependent speed limits and zero-emission legs into account.
Second, a convex fixed-pitch propeller model is introduced, improving the drivetrain modeling.
Third, hydrodynamic forces, acting on the hull and rudder, are modeled in the convex optimization framework and compared against captive test measurements. 
Therefore, the main contribution of this work is the convexification of a model for the simultaneous optimization of the trajectory and the energy management strategy for vessels equipped with a hybrid power source and electrical propulsion. This convexification lays the path for the employing mature, computationally robust convex optimization methods and creates a novel possibility for real-time optimization of these systems. In particular, for the application of autonomous vessels, the onboard computers need to make the path-planning decisions rapidly and reliably without a human being in the loop. In this context, convex optimization is the ideal approach.

\section{Related work} \label{sec:literature_review}
\subsection{Advanced control strategies in the maritime sector}
Due to increasingly demanding reduction goals of \gls{ghg} emissions, the interest of the maritime industry in battery systems combined with a diesel engine as a hybrid energy supply system has increased over the past years.
This technique promises high fuel savings and \gls{ghg} reductions on the one hand.
On the other hand, the costs for propulsion systems increase as the complexity rises. \cite{geert_DesignControlHybrid_2017}

Additionally, these hybrid systems are commonly run with conventional heuristic rule-based control algorithms leading to marginal fuel savings only.
Higher savings could be achieved with more advanced controls such as \gls{ecms} or \gls{mpc}. \cite{geert_DesignControlHybrid_2017}

\Gls{mpc} in the maritime context is for example studied in \cite{park_RealTimeModelPredictive_2015, makin_EnergySavingsShip_2017, hou_MitigatingPowerFluctuations_2018, hasel_ModelPredictiveManeuvering_2019, huota_HybridShipUnit_2020}.
The studies in \cite{makin_EnergySavingsShip_2017, hou_MitigatingPowerFluctuations_2018} are especially interesting in the context of this work since they include the longitudinal vessel dynamics as well as propeller models for electric propulsion.
However, they do not focus on trajectory planning but on the fluctuations due to waves on the propeller and the load sharing, respectively.

The model in \cite{makin_EnergySavingsShip_2017} targets the control of the propulsion system of a vessel with a controllable pitch propeller.
It takes the vessel design speed as an input and the proposed non-linear \gls{mpc} is designed to follow that trajectory by adjusting the control inputs of the propeller - drive frequency, propeller pitch ratio and rate of change in propeller pitch ratio - to minimize the reference speed tracking error and the energy consumed by the electric motor.
Additionally, the control inputs and the tracking errors for reference drive frequency and reference propeller pitch ratio are minimized.
Each term in the cost function includes a weighting factor.
The prediction horizon of the \gls{mpc} is limited to 90~s resolved in 0.1~s steps resulting in only 900 discretization steps in total.

The main goals of the authors in \cite{hou_MitigatingPowerFluctuations_2018} are "to minimize the power tracking error [...] and reduce [hybrid energy storage system] losses to improve energy efficiency".
Their hybrid system consists of a battery system and an ultracapacitor.
Two different \gls{mpc} variants with the vessel speed and motor shaft speed as inputs are developed and compared to achieve optimal load sharing for the aforementioned goals.
The prediction horizon for the receding \gls{mpc} is limited between 10 and 20 steps.
However, the authors of \cite{hou_MitigatingPowerFluctuations_2018} argue that the results are comparable to an offline \gls{mpc} with a prediction horizon of 100 steps and show exemplary results from a real-time platform.

None of the mentioned \gls{mpc} studies focuses on the simultaneous, fuel-minimal optimization of propeller shaft speed, load sharing and speed trajectory.
This simultaneous approach combined with a long prediction horizon could achieve even higher savings.
Additionally, the generation of a feasible speed trajectory considering vessel dynamics as well as propeller and energy supply system behavior is required for autonomous shipping.

A similar conclusion regarding the simultaneous approach is drawn in \cite{frang_DevelopmentsTrendsChallenges_2020}, where an extensive literature review covering 57 journal articles published between 2008 and 2020 dealing with the synthesis, design and operational optimization of maritime energy systems is conducted.
As this work focuses on operational optimization, the findings in \cite{frang_DevelopmentsTrendsChallenges_2020} regarding this field are discussed further.

Out of the 57 articles, eleven deal solely with the operational optimization of maritime energy systems.
Nine other articles consider the design as well, and another ten articles also take the synthesis into account.
However, only six of these 30 articles cover the dynamic operation of the vessel.
Two of the six, \cite{wang_DynamicOptimizationShip_2018} and \cite{tzort_DynamicOptimizationSynthesis_2019}, are of particular interest since they optimize the speed as a control input variable.
They both neglect vessel dynamics such as acceleration and deceleration though and consider the speed to be constant during each leg of the journey.
This is also common practice in maritime logistic problems as seen in \cite{norst_TrampShipRouting_2011, hvatt_AnalysisExactAlgorithm_2013, zis_EnvironmentalBalanceShipping_2015}.
The assumption of constant speed per journey leg might be acceptable for ocean-going vessels because they travel at a constant speed – preferably the respective design speed – for most of the voyage.

Smaller vessels like ro-pax ferries traverse in waters close to coastal areas where speed limits might apply.
Thus, they cannot travel at a single constant speed for most of the journey.
As an additional aspect, the duration of respective acceleration and deceleration phases becomes more significant compared to the overall shorter voyage duration.
Therefore, vessel dynamics must be considered to achieve minimum energy consumption.
To the authors' knowledge, no study exists dealing with the combined approach of optimizing speed profile and energy management for a short sea vessel when taking the longitudinal dynamics into account.

\subsection{Convex optimization in optimal control}
Starting about a decade ago, convex optimization has become more popular in engineering applications dealing with optimal control problems due to the development of computationally efficient and reliable implementations of interior point methods for cone programming \cite{Gon12}.
Especially for integrated design and control of hybrid electric vehicles, the number of publications is rising \cite{SHM+16}.
Other applications include the operation of battery systems for the frequency regulation market in power grids \cite{shi_OptimalBatteryControl_2018}, the energy management in a microgrid for a sustainable community \cite{cai_ConvexOptimizationbasedControl_2017} and the trajectory planning for robots \cite{versc_PracticalTimeOptimalTrajectory_2008}.
The latter, together with \cite{ebbes_TimeoptimalControlStrategies_2018}, which deals with the speed optimization for a hybrid electric race car, are of particular interest because their problems are formulated in the spatial domain.
Modeling in the spatial domain is also applied in this work due to the speed limits. 

In maritime applications, convex optimization is scarcely employed.
Some studies employ its subclass of linear programs \cite{zis_EnvironmentalBalanceShipping_2015} but, rather, more often \glspl{milp} or \glspl{nlp} are used \cite{frang_DevelopmentsTrendsChallenges_2020}.
One example of the latter is the fuel cell hybrid electric vessel in \cite{sun_OptimalEnergyManagement_2020}.
There, the energy management optimization problem is formulated as a convex \gls{minlp} to include some piecewise functions.
It is optimized in the time domain and the total voyage duration of ten hours is discretized in steps of 1~h each.

In \cite{kalik_ShipEnergyManagement_2018}, the minimization of the energy consumption of a tugboat with a hybrid system consisting of a diesel engine and a battery is presented.
The authors formulate a \gls{minlp} to optimize the load sharing between the different power sources.
According to them, the optimization problem is still efficiently solvable because only three integer variables – limiting the problem size – exist and the sub-routines solve convex problems.
Seven different profiles for vessel speed and bollard pull are evaluated through simulation runs in the time domain.
Again, no holistic approach is implemented.
Instead, only the load sharing is optimized.

In \cite{hasel_ModelPredictiveManeuvering_2019} a linearized model for \gls{mpc} of an autonomous ship is developed.
As before, the vessel studied is a tugboat with a hybrid system consisting of a diesel engine and a battery, and several operational profiles are simulated.
It is considered relevant for this literature review because the predicted power demand in the \gls{mpc} is based on a propeller model and the dynamic vessel speed.
The vessel speed is determined by following a reference trajectory while respecting the vessel dynamics.
However, it is not precisely described how the hydrodynamic resistance is obtained.

\subsection{Maneuvering models}
The predominant mathematical models of conventional surface vessel maneuvering consider the motion in a horizontal plane in the surge (forward), sway (crossbody) and yaw (heading) directions.
The equations of motion are expressed by applying Newton's second law in a vessel fixed coordinate system.
The hydrodynamic forces and moments are assumed to be functions of the velocities and accelerations of all three degrees.
The unknown force and moment functions are approximated by terms in a Taylor series expansion, which is a concept originally introduced by Abkowitz \cite{Abk64}.
The coefficients in the expansion are called hydrodynamic derivatives, which are determined from captive tests for a given hull form.

Numerical integration of the nonlinear differential equations describing vessel motions is used in the design phase to predict trajectories and assess the maneuvering performance of a vessel with given hull form.
Maneuverability is described by a number of parameters, such as overshoot angle in zig-zag maneuver, which are required to receive certain numerical values as imposed by classification societies.
The predictive accuracy of Abkowitz-type models is considered sufficient for certification \cite{Abs17}. However, the terms with second and higher order render the differential equations nonlinear and non-convex. Thus, the Abkowitz-type models are incompatible with the convex optimization framework.

The surge force is controlled via the propeller, sway force via bow thrusters and yaw moment via the rudder.
The system is considered underactuated if bow thrusters are unavailable or not used, because only two inputs are available for controlling three degrees of freedom \cite{LPN03}.
In steering and tracking control, the equations of motions are typically linearized by dropping all nonlinear terms and assuming that the surge velocity deviates only slightly from the initial velocity.
The linearized equations for steering were introduced by Nomoto in 1957 \cite{NTH+57}, and are still commonly used.

\subsection{Propeller models}
Independent from the specific design of a propeller, its performance can be described using its general open water characteristics.
These are based on the forces and momenta of the propeller when operating in open water without any disturbances and are usually expressed using the non-dimensional variables \glsentrylong{kt} \glsunset{kt} 
\begin{equation} \label{eq:K_T}
    \gls{kt} = \frac{\gls{T_p}}{\gls{rho_sw}  \gls{D_p}[^4]  n_\text{p}^2},
\end{equation}
\glsentrylong{kq} \glsunset{kq} 
\begin{equation} \label{eq:K_Q}
    \gls{kq} = \frac{\gls{Q_p}}{\gls{rho_sw}  \gls{D_p}[^5]  n_\text{p}^2},
\end{equation}
open water efficiency
\begin{equation} \label{eq:eta_o}
    \eta_o = \frac{P_\text{p,out}}{P_\text{p,in}} = \frac{\gls{T_p}  \gls{v_a}}{2\pi  \gls{Q_p}  n_\text{p}} = \frac{\gls{kt}}{\gls{kq}}  \frac{\gls{J}}{2\pi},
\end{equation}
and advance coefficient
\begin{equation} \label{eq:J}
    \gls{J} = \frac{\gls{v_a}}{n_\text{p}  \gls{D_p}}
    = \frac{(1 - \gls{f_w})  v_\mathrm{s}}{n_\text{p}  \gls{D_p}},
\end{equation}
where \gls{rho_sw} is the density of seawater, $\gls{D_p}$ the diameter of the propeller, $n_\text{p}$ the propeller shaft speed, \gls{Q_p} the propeller torque, $P_\text{p,out}$ thrust power and $P_\text{p,in}$ the shaft input power \cite{carlt_MarinePropellersPropulsion_2007}.

The advance speed $v_\text{a}$ describes the average speed of the water across the surface of the propeller.
Due to the interaction of the hull with the surrounding water, the advance speed is lower than the vessel speed.
This is mathematically expressed through the wake fraction coefficient \gls{f_w} and the relation between advance and vessel speed is given in equation \eqref{eq:J}.

The open water efficiency as well as \gls{kt} and \gls{kq} are often plotted over the advance coefficient in a so-called open water diagram.
An exemplary diagram is shown in Figure~\ref{fig:owd_example}.

\begin{figure}[h]
\centering
\includegraphics[width=\columnwidth]{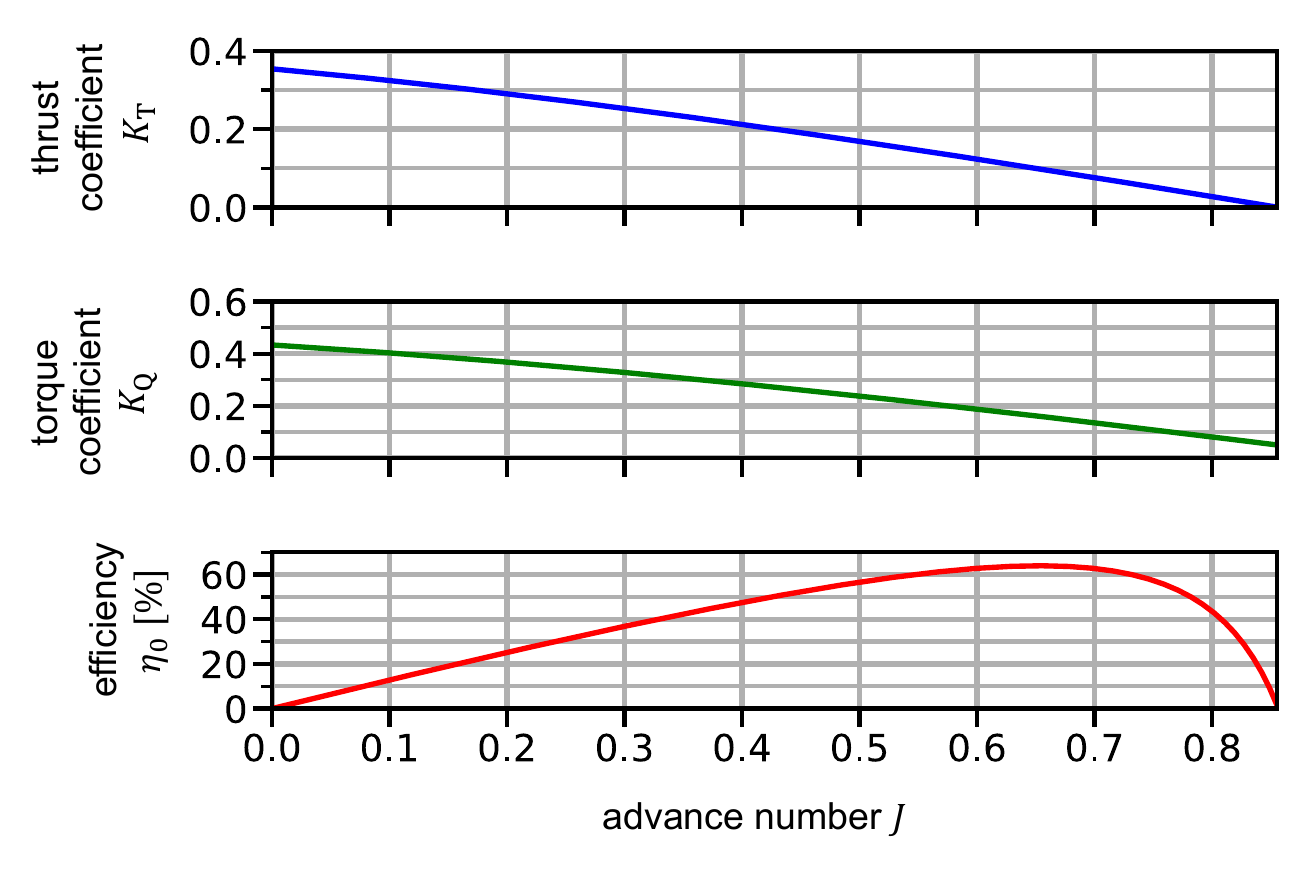}
\caption{Exemplary open water diagram.}
\label{fig:owd_example}
\end{figure}

\textbf{Wageningen B-Series.}
One of the most fundamental and commonly cited propeller models is developed and published in \cite{vanl_WageningenBScrewSeries_1969, ooste_RecentDevelopmentsMarine_1972, ooste_FurtherComputeranalyzedData_1975}.
The model is based on a regression analysis of the Wageningen B-Series.
\Gls{kt} and \gls{kq} are represented as polynomials of the advance coefficient,
\begin{equation} \label{eq:K_T_poly}
    \hat{K}_\mathrm{T} = \sum_{n=1}^{N_\mathrm{T}} C_\mathrm{T,n}  J^{S_\mathrm{T,n}}  \left( \frac{P_\mathrm{p}}{\gls{D_p}} \right)^{t_\mathrm{T,n}}  \left( \frac{A_\mathrm{E}}{A_0} \right)^{u_\mathrm{T,n}}  Z^{v_\mathrm{T,n}}
\end{equation}
and
\begin{equation} \label{eq:K_Q_poly}
    \hat{K}_\mathrm{Q} = \sum_{n=1}^{N_\mathrm{Q}} C_\mathrm{Q,n}  \gls{J}^{S_\mathrm{Q,n}}  \left( \frac{P_\mathrm{p}}{\gls{D_p}} \right)^{t_\mathrm{Q,n}}  \left( \frac{A_\mathrm{E}}{A_0} \right)^{u_\mathrm{Q,n}}  Z^{v_\mathrm{Q,n}}
\end{equation}
depending on several propeller parameters.
These are the number of blades $Z$, the extended blade area ratio ${A_\mathrm{E}}/{A_0}$ - the ratio of the expanded blade area to the disc area - and the pitch diameter ratio ${P_\mathrm{p}}/{\gls{D_p}}$.
They describe the specific design of the propeller and are explained in the related literature \cite{bertr_PracticalShipHydrodynamics_2012, carlt_MarinePropellersPropulsion_2007}.
The analysis in \cite{vanl_WageningenBScrewSeries_1969} is not only limited to the \gls{1q} open water diagram, but \gls{4q} data is also included, and the respective coefficients for thrust and torque are each represented as a Fourier series with 21 coefficients.

\textbf{Models based on lift and drag.}
Since the model in \cite{vanl_WageningenBScrewSeries_1969} is limited to the Wageningen B-Series, the authors in \cite{heale_ImprovedUnderstandingThruster_1994} introduce another \gls{4q} model based on the lift and drag forces of the propeller to achieve a more versatile model, which was used for an underwater vehicle.
They calculate lift and drag based on sinusoidal functions in order to derive thrust and torque with a rotational transformation.
The required inputs are the effective angle of attack and the total relative velocity.
These are calculated based on an additional fluid model.

The propeller model in \cite{heale_ImprovedUnderstandingThruster_1994} requires the axial flow velocity to be known and is estimated with a fluid model.
The authors in \cite{kim_AccuratePracticalThruster_2006} simplify this estimation by calculating the axial flow velocity as the linear combination of the vehicle velocity and the propeller shaft speed for their steady-state model of an underwater vehicle.

In \cite{smoge_ControlMarinePropellers_2006}, where a low-level thruster controller for a fixed pitch propeller is developed, the equations for thrust and torque are modified to include losses concerning mainly the propeller loading.
These losses are summarized in two different loss factors for thrust and torque, which is a function of thrust.
The thrust and torque equations are simply multiplied by the respective coefficient.
The same model is further investigated in the context of thruster control in \cite{soren_TorquePowerControl_2009}.
It is also employed in \cite{skuls_HybridApproachMotion_2021} in the context of motion prediction with machine learning for ship docking.

In \cite{blank_DynamicModelThrust_2000}, the linear approximation of thrust and torque coefficients and the resulting quadratic models for thrust and torque are modified to account for differences in the axial water flow into the propeller.
The authors neglect the drag in the thrust equation, but they note it could be easily included in case the linear approximation does not yield sufficient accuracy.
They add that the thrust coefficient as a function of the advance coefficient would become a second-order polynomial when including the drag.

\textbf{\Gls{4q} model for underwater vehicles.}
The authors in \cite{hausl_FourQuadrantPropellerModeling_2013} derive a \gls{4q} propeller model for the use of underwater vehicles.
They are interested in ``developing algorithms for the computation of energy-optimal trajectories for multiple vehicles acting in cooperation'' and have found the Wageningen B-Series \gls{4q} model to be insufficient due to numerical reasons.
They consider using the model from \cite{heale_ImprovedUnderstandingThruster_1994}, but discard it due to physical inconsistencies regarding the behaviour of thrust, torque and open water efficiency.
Therefore, they introduce another sinusoidal model based on lift and drag to receive a low-order approximation of the model in \cite{vanl_WageningenBScrewSeries_1969}.

\textbf{Propeller models in optimization.}
Most of the previously mentioned articles either deal with low-level thruster control or underwater vehicles and are included to give an overview over different propeller models.
Articles dealing with problems more closely related to this work include \cite{geert_PitchControlShips_2017, kalik_ShipEnergyManagement_2018, wang_DynamicOptimizationShip_2018, hou_AdaptiveModelPredictive_2018, hasel_ModelPredictiveManeuvering_2019}.

\textbf{\Gls{4q} models for controllable and fixed pitch propellers.}
In \cite{geert_PitchControlShips_2017} and \cite{kalik_ShipEnergyManagement_2018}, models based on the \gls{4q} open water diagram are employed.
The first article focuses on fuel savings for diesel-mechanical propulsion and considers controllable pitch propellers of the Wageningen C- and D-series.
The influence of waves on the propeller performance and the engine loading is considered by including the wave speed in the calculation of the advance speed.
In \cite{kalik_ShipEnergyManagement_2018}, the energy management for hybrid propulsion with a fixed-pitch propeller of the Wageningen B-series is examined.

\textbf{\Gls{1q} models for fixed pitch propellers.}
The authors in \cite{hasel_ModelPredictiveManeuvering_2019} use a \gls{1q} model based on the open water diagram of the Wageningen B-Series for maneuvering control and energy management of a hybrid vessel with electric propulsion.
Thrust and torque are expressed as quadratic functions of the propeller shaft speed and utilize \gls{kt} and \gls{kq}.
The coefficients are given as functions of propeller geometry and advance coefficient.

A similar approach is used in \cite{wang_DynamicOptimizationShip_2018} for a \gls{mpc} to optimize the energy efficiency of a vessel.
\gls{kt} and \gls{kq} are based on the \gls{1q} open water diagram of the Wageningen B-series, but neither thrust nor torque is explicitly calculated.
Instead, the coefficients are used to determine the open water efficiency.

The authors in \cite{hou_AdaptiveModelPredictive_2018} also present a \gls{1q} model based on the open water diagram of the Wageningen B-Series.
Their goal is to minimize the wear of the propulsion system and maximize the efficiency of the used hybrid energy supply system when considering propeller load fluctuations.
Therefore, they modify the presented model and suggest using a torque prediction model with online parameter estimation.
The thrust is not estimated.

\textbf{\Gls{1q} model for controllable pitch propeller.}
In \cite{makin_EnergySavingsShip_2017} a propeller is modeled in the context of non-linear \gls{mpc} for an optimal control problem to minimize the energy consumption of a bulk carrier equipped with electric propulsion.
The propeller torque is modeled as a function of shaft speed and \gls{kq}. 
\gls{kq} is a second-order polynomial of the advance coefficient - the same is true for \gls{kt}.
The coefficients for both, \gls{kt} and \gls{kq}, depend on the pitch of the propeller.

\textbf{Comparison of propeller models.}
The presented models show how wide is the range of approaches to model the complex behavior of propellers.
Whereas more detailed models rely on high order polynomials or sinusoidal functions, simplified models sufficiently accurate for the optimization of vessel operation are typically based on low order polynomials due to computational efficiency.
Throughout all kinds of studies about fixed pitch propeller models it is common to use the Wageningen B-Series.

\section{Minimum time and fuel optimal control problem}
This section first lays out the elementary minimum energy and time optimal control problem along a fixed path.
The nonconvexity of this problem is resolved via spatial domain reformulation and variable changes, which are discussed next. 
Convex reformulations of hydrodynamic forces, propeller and energy system components are discussed immediately following the derivation of the equations governing the respective subsystem. 
The final part of this section derives a condition that, when satisfied, ensures that the reformulated convex problem provides the same solution as the original nonconvex problem.

\subsection{Minimum time and energy problem along a fixed path}

\textbf{Dynamics.}
We consider a vessel with $p$ degrees of freedom represented by the configuration vector $\gls{q}(t)\in \reals^p$ and control input vector $\gls{u}(t) \in \reals^r$.
The equations of motion arising from Newton’s second law are of the second-order form in the global frame
\begin{equation} \label{eq:time_dynamics}
    \gls{R}(\gls{q}) \, \gls{u} = \gls{M} \, \Ddot{\gls{q}} + \tau(\gls{q}),
\end{equation}
where $\Ddot{\gls{q}}$ is the elementwise second time derivative of the state vector, $ \gls{R}(\gls{q}):\reals^p \rightarrow \reals^{p \times r}$ is the state dependent rotation matrix, $ \gls{M} \in \reals^{p \times p}$ is the symmetric, positive definite mass matrix and $\tau(\gls{q})\in\reals^p$ is the disturbance force.

\textbf{Path.}
Let the function $\gls{sigma} : [0,T] \rightarrow [0,1]$ denote a path coordinate, such that $\gls{sigma}(0) = 0, \gls{sigma}(T) = 1$ and speed along the path is $\dot{\gls{sigma}} > 0$, where $T$ is the terminal time \cite{versc_PracticalTimeOptimalTrajectory_2008}.
The vector-valued function $\gls{s} : [0,1] \rightarrow \reals^p$ defines the path of the vessel as a mapping from the path coordinates to the state vector of the vessel \gls{q} at every point along the path.
The vessel moves along the path when
\begin{equation}\label{eq:path_definition}
    \gls{s}(\gls{sigma}(t)) = \gls{q}(t)
\end{equation}
is fulfilled for each $t \in [0,T]$.

\textbf{Optimal control problem.}
We can represent the dynamics \eqref{eq:time_dynamics} in terms of the path coordinate $\gls{sigma}$ using the relation (see Appendix \ref{app:dynamics_spatial})
\begin{equation*}
    \ddot{\gls{q}}(t) = \gls{s}''(\gls{sigma}(t))\dot{\gls{sigma}}(t)^2 + \gls{s}'(\gls{sigma}(t))\ddot{\gls{sigma}}(t).
\end{equation*}
By introducing the new function
\begin{equation} \label{eq:b}
    \gls{b} = \dot{\gls{sigma}}^2
\end{equation}
the minimum time and energy motion planning problem (P-CVX) along a fixed path is formulated as a convex optimization problem in the spatial domain:
\begin{align*}
     \text{(P-CVX):} & \\
     \underset{b(\gls{sigma}),\gls{u}(\gls{sigma})}{\text{min.}} \quad & \int_{0}^{1} \left( \frac{1}{\sqrt{b(\gls{sigma})}} + \phi(\gls{u}(\gls{sigma})) \right) \,d\gls{sigma} \\
    \text{s.t. } \quad & \gls{R} \left(\gls{s}(\gls{sigma})\right) \, \gls{u}(\gls{sigma}) = \gls{M}\, \gls{s}'(\gls{sigma}) \, \frac{\gls{b}'(\gls{sigma})}{2}, \\ 
     & + \gls{M} \, \gls{s}''(\gls{sigma}) \, b(\gls{sigma}) + \tau\left(\gls{s}(\gls{sigma})\right), \quad \gls{sigma} \in [0,1], \\
     & \left( \gls{s}'(\gls{sigma})^2 \gls{b}(\gls{sigma}), \gls{u}(\gls{sigma}) \right) \in \Pi\left(\gls{s}(\gls{sigma})\right), \quad \gls{sigma} \in [0,1],
\end{align*}
where $\phi : \reals^r \rightarrow \reals$ is a convex fuel use function and $\Pi(\gls{q}) \subseteq \reals^{p \times r}$ is a set valued mapping of convex sets.
The integral of the first term in the objective is $T$, the terminal time:
\begin{equation*}
    \int_{0}^{1}\frac{d\gls{sigma}}{\sqrt{\gls{b}(\gls{sigma})}}  =
    \int_{0}^{1}\frac{d\gls{sigma}}{\dot{\gls{sigma}}}  = \int_{\gls{sigma}(0)}^{\gls{sigma}(T)}\frac{d\gls{sigma}}{\dot{\gls{sigma}}} = \int_{0}^{T}1dt = T.
\end{equation*}
See \cite{versc_PracticalTimeOptimalTrajectory_2008, lipp_MinimumtimeSpeedOptimisation_2014} for further discussion of the convexity of problem P-CVX, and its relation to the time domain formulation.

\textbf{Relaxation and change of variables.}
In the following sections, vessel subsystem models that are compatible with the problem formulation P-CVX are discussed.
To support this discussion, auxiliary variables and constraints that will be applied throughout the following sections are introduced.
The explicit dependence of functions on \gls{sigma} will be omitted in most cases.

\subsection{Vessel dynamics and hydrodynamic forces} \label{sec:vessel_dynamics}
The vessel is regarded as a rigid body, and the maneuvering model restricts the motion to three degrees of freedom: surge, sway and yaw.
The heel is discarded, meaning the vessel always maintains an upright orientation.
Only still water condition is considered.
This in-plane motion approximation is commonly used for surface vessels \cite{Mat13}.

\subsubsection{Three-degree-of-freedom vessel model}
The navigational position of the vessel is given in an $xy$-plane fixed to the earth.
The origin (0,0) is located at the initial location of the vessel’s center of gravity.
Let $\gls{q} = (x,y,\theta)$ denote the vessel configuration state vector, where elements $(x,y)$ are the in-plane position and $\theta$ is the orientation with respect to $x$-axis.

\begin{figure}[h]
    \centering
    \includegraphics[width=\columnwidth]{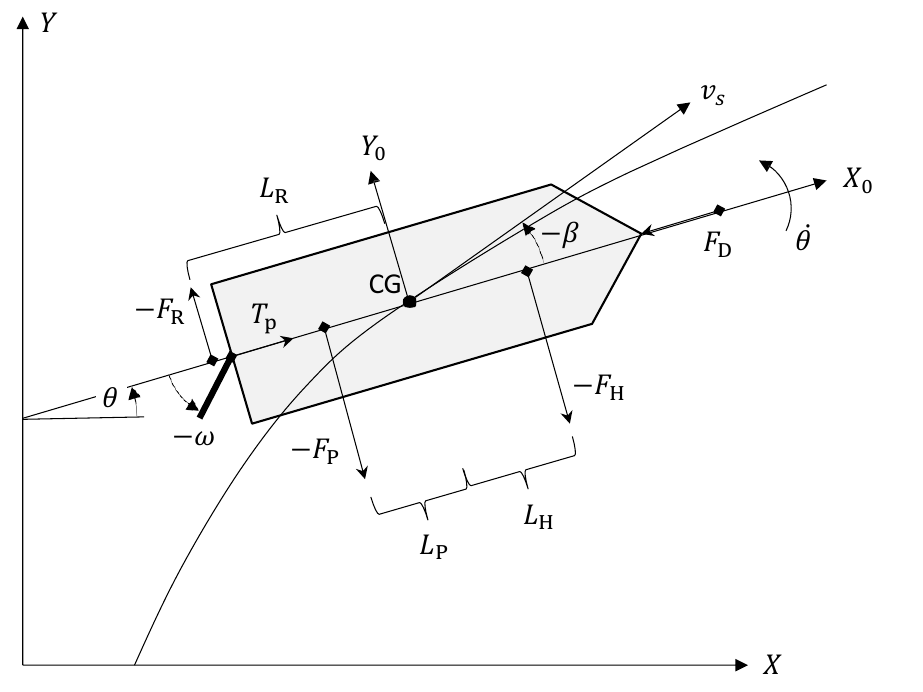}
    \caption{Forces acting on the hull.}
    \label{fig:three-dof}
\end{figure}

The hydrodynamic forces consist mainly of resistance from moving the hull entrained in fluid, and forces generated by the propellers and rudders.
Let $\vec{u}=(\gls{T_p}, \gls{F_D}, \gls{F_H}, \gls{F_P}, \gls{F_R})^\top$ denote the force input vector, where \gls{T_p} is propeller thrust, \gls{F_D} is the total drag, \gls{F_H} is the hydrodynamic hull force from drift, \gls{F_P} is the hydrodynamic damping force and \gls{F_R} is rudder steering force.
The force inputs are represented in the frame of the vessel (Figure \ref{fig:three-dof}).
The rotation matrix maps the force inputs in the vessel frame into the global frame:
\begin{equation*}
    \gls{R} =
    \begin{bmatrix}
        \cos(\theta) & -\cos(\theta) & -\sin(\theta) & -\sin(\theta) & \sin(\theta)\\
        \sin(\theta) & -\sin(\theta) & \cos(\theta) & \cos(\theta) & -\cos(\theta)\\
        0 & 0 & \gls{L_H} & -\gls{L_P} & \gls{L_R}
    \end{bmatrix},
\end{equation*}
where $\gls{L_H}, \gls{L_P}$ and $\gls{L_R}$ denote the lever arms of drift, damping and rudder forces, respectively.
The mass and inertia matrix is written as
\begin{equation*}
    \gls{M} = \mathop{\mathrm{diag}}(\gls{m_vessel}  (1+k_1),\gls{m_vessel}  (1+k_1),\gls{I}+k_2  \gls{I_w}),
\end{equation*}
where \gls{m_vessel} is vessel mass, \gls{I} is the moment of inertia of the vessel, \gls{I_w} is the moment of inertia of displaced water and $k_1$ and $k_2$ are dimensionless coefficients of the added inertia \cite{DS46}.
The added inertias account for the fact that accelerating the vessel body at a given rate in fluid requires larger force than in vacuum \cite{Mat13}.

\subsubsection{Drift}
The nominal orientation of the vessel is assumed to be known, dependent on the position $(x,y)$, and in the direction of traversal.
We allow the true orientation to vary by the angle $\beta$ from the nominal orientation.
Any deviation from the nominal orientation implies that the vessel has nonzero speed in the sway direction, which is called drifting. 

Drift generates a lift force that acts perpendicular to the hull.
The force acts through a point located at distance \gls{L_H} from the center of gravity towards the bow.
The yawing moment at the center is the moment arm \gls{L_H} multiplied by the force \gls{F_H}.

In the presence of drift, the inflow to the hull resembles the flow over a low aspect ratio airfoil at an angle of attack $\beta$, which is the drift angle \cite{Mat13}.
The lift produced by the oblique flow is
\begin{equation} \label{eq:drift_lift}
    \gls{F_H} = \frac{1}{2}  \gls{rho_sw}  S   C_{\mathrm{L}}  \gls{v_s}^2,
\end{equation}
where $S$ is the area of the lifting surface and \gls{v_s} is the vessel speed in surge direction: 
\begin{equation*}
\begin{split}
    \gls{v_s} &= \lVert (\dot{x},\dot{y})  \rVert_2\\
        &= \sqrt{\dot{x}^2 + \dot{y}^2}\\
        &= \sqrt{\dot{\glsuseri{q}}_1^2 + \dot{\glsuseri{q}}_2^2}.
\end{split}
\end{equation*}

\textbf{Maximum drift.}
The coefficient $C_{\mathrm{L}}$ can be approximated for small $\beta$ as
\begin{equation*}
    C_{\mathrm{L}} = a_{\mathrm{L,0}} + a_{\mathrm{L,1}}  \beta,
\end{equation*}
where $a_{\mathrm{L,0}}, a_{\mathrm{L,1}} \in \reals_{++}$ are constants.
Then, the maximum lift that can be generated by drifting,
\begin{equation} \label{eq:max_drift_force}
    |\gls{F_H}| \leq \frac{\gls{rho_sw}}{2}  S  C_{\mathrm{L,max}}  \gls{v_s}^2\,,
\end{equation}
is limited by the largest permitted drift angle $\beta_\text{max}$ or the largest permitted lift coefficient $C_{\mathrm{L,max}} = a_{\mathrm{L,0}} + a_{\mathrm{L,1}}  \beta_\text{max}$, respectively.

\textbf{Convexification of drift.}
The inequality \eqref{eq:max_drift_force} defines a nonconvex set because the function on the right-hand side is convex instead of affine or strictly concave \cite{boyd_ConvexOptimization_2004}.
Thus, with \eqref{eq:q_dot_aux} (see Appendix \ref{app:dynamics_spatial}) and the auxiliary variable \gls{b} the vessel speed in surge direction is rewritten as
\begin{equation} \label{eq:v2spatial}
\begin{split} 
    \gls{v_s} &= \sqrt{(\gls{s}'_1  \dot{\sigma})^2 + (\gls{s}'_2  \dot{\sigma})^2}\\
    &= \sqrt{(\gls{s}_1^{\prime 2} + \gls{s}_2^{\prime 2})}  \dot{\gls{sigma}}\\
    &= \sqrt{(\gls{s}_1^{\prime 2} + \gls{s}_2^{\prime 2})}  \sqrt{\gls{b}}.
\end{split}
\end{equation}
The first term is hereafter abbreviated as
\begin{equation*} 
    \gls{s}'_{12} = (\gls{s}_1^{\prime 2} + \gls{s}_2^{\prime 2})
\end{equation*}
and can be directly obtained from the definition of the path, and thus calculated prior to solving the optimization problem.

By inserting \eqref{eq:v2spatial} into \eqref{eq:max_drift_force} the right-hand side of \eqref{eq:max_drift_force} becomes affine in \gls{b} and can be used to limit the drift in the convex optimization problem with the constraint
\begin{equation} \label{eq:constraint_drift_force}
    |\gls{F_H}| - \frac{\gls{rho_sw}}{2}  S  C_{\mathrm{L,max}}  \gls{s}'_{12}  \gls{b} \leq 0,
\end{equation}
which is given in the standard form of inequality constraints in optimization problems.

\subsubsection{Drag}
Induced drag from drift is incorporated to the total resistance via the total drag coefficient $C_{\mathrm{D}}$.
The drag is given by
\begin{equation} \label{eq:F_D}
    \gls{F_D} = \frac{1}{2}  \gls{rho_sw}  C_{\mathrm{D}}  A_{\mathrm{s}}  \gls{v_s}^2,
\end{equation}
where $A_{\mathrm{s}}$ is the combined wetted surface area of the hull and the rudder. The drag coefficient is
\begin{equation} \label{eq:drag_coef}
    C_{\mathrm{D}} = \frac{C_{\mathrm{L}}^2}{\pi  \Omega} + C_{\mathrm{F}} + C_{\mathrm{R}},
\end{equation}
where $\Omega$ is the aspect ratio of the lifting surface and $C_{\mathrm{F}}$ is the frictional resistance coefficient, which can be evaluated according to the ITTC-57 empirical formula
\begin{equation*}
    C_{\mathrm{F}} =  \frac{0.075}{(\log_{10}(\mathrm{Rn})-2)^2}\,, \quad \mathrm{Rn}=\frac{v_s  L}{\nu},
\end{equation*}
where $L$ is vessel length and $\nu$ is kinematic viscosity of water \cite{Itt11}.
In \eqref{eq:drag_coef}, the residual coefficient $C_{\mathrm{R}}$ accounts for wave-making and viscous pressure resistance.
Estimates based on the Froude number $F_n=v_s/\sqrt{g  L}$ are given in various sources, while a more accurate model is obtained by fitting a convex function to data generated by CFD simulation. 

By expressing the coefficient of lift $C_{\mathrm{L}}$ as a function of the drift force from \eqref{eq:drift_lift} and inserting the result into \eqref{eq:F_D}, the drag
\begin{equation} \label{eq:drag}
    \gls{F_D} = \frac{\gls{rho_sw}}{2}  (C_{\mathrm{F}} + C_{\mathrm{R}})  A_{\mathrm{s}}  \gls{v_s}^2 + \frac{2  \gls{F_H}^2  A_{\mathrm{s}}}{\gls{rho_sw}  \pi  \Omega  S^2  \gls{v_s}^2}
\end{equation}
is obtained.
The added drag due to rudder deflection is discussed in Section \ref{sec:rudder_lift}.

\textbf{Convexification of the drag.}
The right-hand side of this formulation is not affine and cannot be included in the convex optimization model.
However, by inserting \eqref{eq:v2spatial}, the right-hand side becomes convex in \gls{b} and \gls{F_H} and the relaxed form of \eqref{eq:drag},
\begin{equation} \label{eq:drag_convex}
    \frac{\gls{rho_sw}}{2}  (C_{\mathrm{F}} + C_{\mathrm{R}})  A_{\mathrm{s}}  \gls{s}'_{12}  \gls{b} +
        \frac{2  \gls{F_H}^2  A_{\mathrm{s}}}{\gls{rho_sw}  \pi  \Omega  S^2  \gls{s}'_{12}  \gls{b}} - \gls{F_D} \leq 0
\end{equation}
becomes a lower bound for the drag.

\subsubsection{Yaw damping}
The rotation of the vessel gives rise to a hydrodynamic force and an associated moment which depend on the rate of turn and the surge speed.
The hydrodynamic force dampens turning because the moment acts opposite to the drift moment.
Slender-body theory has been observed to provide a good approximation of force and moment involved in turning \cite{New08, TH03}.
Slender-body theory models the hull as a stack of thin sections that are approximated by simple shapes whose added mass is given by an analytic expression.

The net force prediction from slender-body theory is calculated by integrating strip force at coordinate $x$ over the entire hull \cite{TH03}.
For a vessel with a pointed bow, the added mass at the bow is zero, resulting in net force
\begin{equation} \label{eq:damp_force}
    \gls{F_P} = x_\mathrm{T} m_\mathrm{a}(x_\mathrm{T})  \gls{v_s}  \dot{\theta}. 
\end{equation} 
Here, $x_\mathrm{T}$ is the coordinate for stern.
Assuming draft $T$ is constant along the hull length, and added mass approximated by its value for an ellipse, the strip added mass $m_\mathrm{a}(x)$, can be estimated as
\begin{equation*}
    m_\mathrm{a}(x)=\frac{\pi}{2}  \gls{rho_sw}  T^2,
\end{equation*}
where the factor $1/2$ is applied to include only the lower half of the ellipse \cite{New08}. 

\textbf{Convexification of damping.}
The right-hand side of \eqref{eq:damp_force} relate to the product $\gls{v_s}  \dot{\theta}$, which can be rewritten as
\begin{equation} \label{eq:theta_dot_spatial}
    \gls{v_s}  \dot{\theta} = \glsuseriv{s}_3  \sqrt{\gls{b}}  \sqrt{\gls{s}'_{12} \gls{b}}
    = \glsuseriv{s}_3  \sqrt{\gls{s}'_{12}}  \gls{b},
\end{equation}
which is linear in $b$.
Thus, \eqref{eq:damp_force} obtains the form of a linear equation.

\subsubsection{Rudder lift} \label{sec:rudder_lift}
A rudder set at a nonzero angle $\omega$ develops a lift force that acts sideways (sway direction) and creates a yaw moment at the center of gravity that turns the vessel.
An angled rudder also creates a drag force that acts in the surge direction.

\textbf{Downstream velocity.}
The velocity of the water flowing to the rudder is substantially higher than the speed of the vessel because the rudder is located downstream from the propeller.
The downstream velocity can be evaluated by means of the thrust loading coefficient,
\begin{equation*}
    C_\text{th} = \frac{\gls{T_p}}{\frac{\gls{rho_sw}}{2}  \gls{v_a}[^2]  \frac{\pi}{4}  \gls{D_p}[^2]}
\end{equation*}
describing the loading degree of the propeller.
The flow velocity downstream, 
\begin{equation*}
    v_\text{ds} = \gls{v_a}  \sqrt{1+C_\text{th}} = \sqrt{\gls{v_a}[^2] + \frac{\gls{T_p}}{\frac{\gls{rho_sw}}{2}  \frac{\pi}{4}  \gls{D_p}[^2]}}\,
\end{equation*}
is calculated according to potential flow theory \cite{Mat13}.
Due to the close proximity of the propeller and rudder and turbulent mixing of the water jet and surrounding flow, the velocity of the water entering the rudder is slightly lower than $v_\text{ds}$.
The corrected velocity is expressed as as $v_\text{R} = k_\text{tm}  v_\text{ds}$, where $k_\text{tm} \approx 0.9$.

\textbf{Boundaries of rudder lift force.}
The lift force generated by the rudder is given as
\begin{equation*}
    L_\text{rudder}=\frac{\gls{rho_sw}}{2}  C_\text{K}  A_\text{R}  v_\text{R}^2,
\end{equation*}
where $A_\text{R}$ is the projected area of the side view of the rudder \cite{Mat13}.
The lift coefficient is 
\begin{equation*}
    C_\text{K}=\frac{2 \pi  \Lambda  \left( \Lambda + 1 \right)}{\left( \Lambda + 2 \right)^2}  \sin(\omega),
\end{equation*}
where $\Lambda=b_\text{R}^2/A_\text{R}$ denotes aspect ratio with the average height $b_\text{R}$ and $\omega$ is the angle of attack.
The generated rudder force
\begin{multline} \label{eq:boundaries_F_R}
    |\gls{F_R}| \leq \frac{\gls{rho_sw}}{2}  C_\text{K} \left( \omega_\text{max} \right)  A_\text{R}  v_\text{R}^2 \\
    = \frac{\gls{rho_sw}}{2}  C_\text{K} \left( \omega_\text{max} \right)  A_\text{R}  k_\text{tm}^2  \left( \gls{v_a}[^2]+\frac{\gls{T_p}}{\frac{\gls{rho_sw}}{2}  \frac{\pi}{4}  \gls{D_p}[^2]} \right)
\end{multline}
is limited by the total angle of attack, set as $\omega_\text{max}$.

\textbf{Convexification of boundaries of rudder lift force.}
By inserting \eqref{eq:b} into \eqref{eq:boundaries_F_R}, an affine formulation of the boundaries of the rudder lift force is obtained 
\begin{multline} \label{eq:boundaries_F_R_convex} 
    |\gls{F_R}| - \frac{\gls{rho_sw}}{2}  C_\text{K}\left(\omega_\text{max} \right)  A_\text{R}  k_\text{tm}^2 \\
    -\left( (1 - \gls{f_w})^2  \gls{s}'_{12}  \gls{b} + \frac{\gls{T_p}}{\frac{\gls{rho_sw}}{2}  \frac{\pi}{4}  \gls{D_p}[^2]} \right) \leq 0.
\end{multline}
Here, \gls{T_p} is given in \eqref{eq:T_p_approx}.

\textbf{Rudder drag.}
We only model the drag due to deflection, because the rudder area and thus the viscous drag is already included in the hull surface area. The drag force \cite{Mat13} 
\begin{equation*}
    \gls{D_R} = \frac{\gls{rho_sw}}{2}  \frac{1.1  C_\text{K}^2}{\pi  \Lambda}  A_\text{R}  v_\text{R}^2
\end{equation*}
yields a convex inequality when the term $C_\text{K}$ is expressed in terms of \gls{F_R}
\begin{equation} \label{eq:rudder_drag}
    \gls{D_R} \geq \frac{2.2  \gls{F_R}^2}{A_\text{R}  \pi  \Lambda  \gls{rho_sw}^2  k_\text{tm}^2 \\
    \left( (1 - \gls{f_w})^2  \gls{s}'_{12}  \gls{b} + \frac{\gls{T_p}}{\frac{\gls{rho_sw}}{2}  \frac{\pi}{4}  \gls{D_p}[^2]} \right)}.
\end{equation}

\subsection{Propeller model} \label{sec:propeller_model}
Fixed-pitch propellers are considered for propulsive thrust generation.
The respective model needs to replicate thrust, torque and efficiency with sufficient accuracy, but it is not required to capture all hydrodynamic effects.
The latter can be achieved with more computationally expensive models during the propeller design process, which is not part of this work.

\textbf{Thrust and torque.}
The first step is to rearrange equations~\eqref{eq:K_T} and \eqref{eq:K_Q} as
\begin{equation} \label{eq:T_p}
    \gls{T_p} = \gls{rho_sw}  \gls{D_p}[^4]  \gls{kt} \left( \gls{J} \right)  n_\text{p}^2
\end{equation}
and
\begin{equation} \label{eq:Q_p}
    \gls{Q_p} = \gls{rho_sw}  \gls{D_p}[^5]  \gls{kq} \left( \gls{J} \right)  n_\text{p}^2\,.
\end{equation}
Neither of the equations forms a convex set.
The expressions for \Gls{kt} and \gls{kq} are polynomials - see \eqref{eq:K_T_poly} and \eqref{eq:K_Q_poly} - and hence not easily convexified.

\textbf{Convexification of thrust and torque coefficients.}
The curves in the open water diagram in \figref{fig:owd_example} suggest the convex, second-order polynomial approximations
\begin{equation} \label{eq:K_T_approx}
    \gls{kt}\left(\gls{J}\right) = - a_\text{T,2}  \gls{J}^2 - a_\text{T,1}  \gls{J} + a_\text{T,0}
\end{equation}
and
\begin{equation} \label{eq:K_Q_approx}
    \gls{kq}\left(\gls{J}\right) = - a_\text{Q,2}  \gls{J}^2 - a_\text{Q,1}  \gls{J} + a_\text{Q,0}
\end{equation}
depending on the advance coefficient.
This assumption is supported by the findings in \cite{kim_AccuratePracticalThruster_2006, blank_DynamicModelThrust_2000}.

In equation \eqref{eq:K_T_approx}, $a_\text{T,2} \in \reals_+$, $a_\text{T,1} \in \reals_+$ and $a_\text{T,0} \in \reals_{++}$ are the coefficients for the approximation function of \gls{kt}.
They are determined by fitting the second-order polynomial to the original open water diagram.
The same is true for the coefficients of the torque coefficient function, $a_\text{Q,2} \in \reals_+$, $a_\text{Q,1} \in \reals_+$ and $a_\text{Q,0} \in \reals_{++}$, in ~\eqref{eq:K_Q_approx}.

\textbf{Convexification of the thrust.}
By inserting the thrust coefficient approximation from equation \eqref{eq:K_T_approx} in equation \eqref{eq:T_p} and by replacing the advance coefficient with equation \eqref{eq:J}, the thrust in equation \eqref{eq:K_T_approx} is rewritten as
\begin{equation} \label{eq:T_p_prelim}
    \gls{T_p} = \gls{rho_sw}  \gls{D_p}[^4] 
        \left( - \frac{a_\text{T,2}}{\gls{D_p}[^2]}  v_\text{a}^2 -
        \frac{a_\text{T,1}}{\gls{D_p}}  v_\text{a}  n_\text{p} + a_\text{T,0}  n_\text{p}^2 \right).
\end{equation}
As this equation must be represented as an equality constraint, the right-hand side needs to be formulated as an affine expression.

Thus, a new auxiliary variable
\begin{equation} \label{eq:n_tilde}
    \gls{n_p} = n_\text{p}^2
\end{equation}
substituting the squared shaft speed is introduced.

Since equation \eqref{eq:T_p_prelim} is still not convex due to the bilinear term of advance speed and shaft speed, another auxiliary variable
\begin{equation} \label{eq:z}
    \gls{z} = \gls{v_s}  n_\text{p} \leq \sqrt{\gls{s}'_{12}  \gls{b}  \gls{n_p}} \Leftrightarrow \gls{z} - \sqrt{\gls{s}'_{12}  \gls{b}  \gls{n_p}} \leq 0
\end{equation}
is proposed.
It expresses the aforementioned bilinear term of advance and shaft speed as the square root of the auxiliary variables from equations \eqref{eq:b} and \eqref{eq:n_tilde}.

With the auxiliary variables in \eqref{eq:n_tilde}, \eqref{eq:z}, and \eqref{eq:b}, the thrust is rewritten as
\begin{equation} \label{eq:T_p_approx}
    \gls{T_p} = - \glsuseriii{a_T}  \gls{s}'_{12}  \gls{b}
                   - \glsuserii{a_T}  z
                   + \glsuseri{a_T}  \gls{n_p}.
\end{equation}
For the sake of readability, the auxiliary coefficients $\glsuseriii{a_T}$, $\glsuserii{a_T}$ and $\glsuseri{a_T}$ are introduced.
They are given in \appref{app:T_p_coeffs}.
The formulation \eqref{eq:T_p_approx} is affine and can be included as an equality constraint in the convex optimization model.

\textbf{Convexification of the torque.}
An analogous approach leads to the affine formulation of the propeller torque
\begin{equation} \label{eq:Q_p_approx}
    \gls{Q_p} = - \glsuseriii{a_Q}  \gls{s}'_{12}  \gls{b}
                   - \glsuserii{a_Q}  z
                   + \glsuseri{a_Q}  \gls{n_p}.
\end{equation}
Once again, auxiliary coefficients $\glsuseriii{a_Q}$, $\glsuserii{a_Q}$ and $\glsuseri{a_Q}$, given in \appref{app:Q_p_coeffs}, are used.

\textbf{Convexification of the propulsive power.}
To obtain the last missing value for the propeller model, namely the change in propeller energy input, the equation for the propulsive power is formulated as
\begin{equation} \label{eq:P_p_spat}
    P_{\mathrm{p,in}} = 2\pi  \gls{Q_p}  n_\text{p}.
\end{equation}
Since the power is the derivative of the energy with regard to time, it cannot be included in the convex optimization model.
Therefore, a fictive propeller input force - the index ``in'' is omitted for the sake of readability -
\begin{equation} \label{eq:intro_FdEp}
    \gls{F_dE_p} = \frac{dE_\text{p}}{d\gls{sigma}} = \frac{dE_\text{p}}{dt}  \frac{dt}{d\gls{sigma}} = P_{\mathrm{p,in}}  \frac{dt}{d\gls{sigma}} = \frac{P_{\mathrm{p,in}}}{\sqrt{\gls{b}}}
\end{equation}
is introduced.

By replacing the propulsive power with \eqref{eq:intro_FdEp}, \eqref{eq:P_p_spat} is rewritten and the change in propeller energy input
\begin{equation} \label{eq:dE_p_prelim}
    \gls{F_dE_p} = \frac{2\pi  \gls{Q_p}  n_\text{p}}{\sqrt{\gls{b}}}
\end{equation}
is derived.

The resulting function cannot be included in a convex optimization model.
Therefore, the propulsive torque is replaced with equation \eqref{eq:Q_p_approx}.
Additionally, equation \eqref{eq:dE_p_prelim} is relaxed with the auxiliary variables from equations \eqref{eq:b}, \eqref{eq:n_tilde} and \eqref{eq:z} and the right-hand side of
\begin{equation} \label{eq:dE_p}
    \gls{F_dE_p} \geq - \glsuseriii{k_dEp}  \gls{z} - \glsuserii{k_dEp}  \gls{n_p} + \glsuseri{k_dEp}  \frac{\gls{n_p}^2}{z}
\end{equation}
is now convex.
Again, for the sake of readability the auxiliary coefficients $\glsuseriii{k_dEp} \in \reals_+$, $\glsuserii{k_dEp} \in \reals_+$ and $\glsuseri{k_dEp} \in \reals_{++}$ are introduced.
They are given in \appref{app:coeff_dE_p}.

\textbf{Convexification of propulsive power for reduced model.}
The auxiliary variable \gls{z} can be eliminated from the propeller model by assuming that $a_\text{T,1}=0=a_\text{Q,1}=0$, at the cost of larger fitting error.
In this case, the thrust, torque and energy input expressions retain their convexity.
Appendix \ref{app:power_convexity} provides a proof of the convexity of the energy input expression
\begin{equation}
    \gls{F_dE_p} \geq 2\pi  \left( \frac{a_\text{Q,0}  \gls{n_p}^2  \gls{D_p}[^5]}{\sqrt{\gls{n_p}  \gls{s}'_{12}  \gls{b}}} - a_\text{Q,2}  \sqrt{\gls{n_p}  \gls{s}'_{12}  \gls{b}}  \gls{D_p}[^3] \right)
\end{equation}
for the reduced model.

\subsection{Subsystem models} \label{sec:subsystem_models}
\subsubsection{Drivetrain model} \label{sec:drvtr_model}
The energy system under consideration is shown in \figref{fig:es_layout}.
It consists of a general energy converter, e.g., a fuel cell or a generating set, a battery system, power electronic components and two electric motors with one gearbox each.
Depending on the actual type of the general energy converter, the respective DC/DC converter might need to be replaced by a three-phase rectifier circuit.
This would be the case for a generating set or a gas turbine.

\begin{figure}[h]
\centering
\includegraphics[width=\columnwidth]{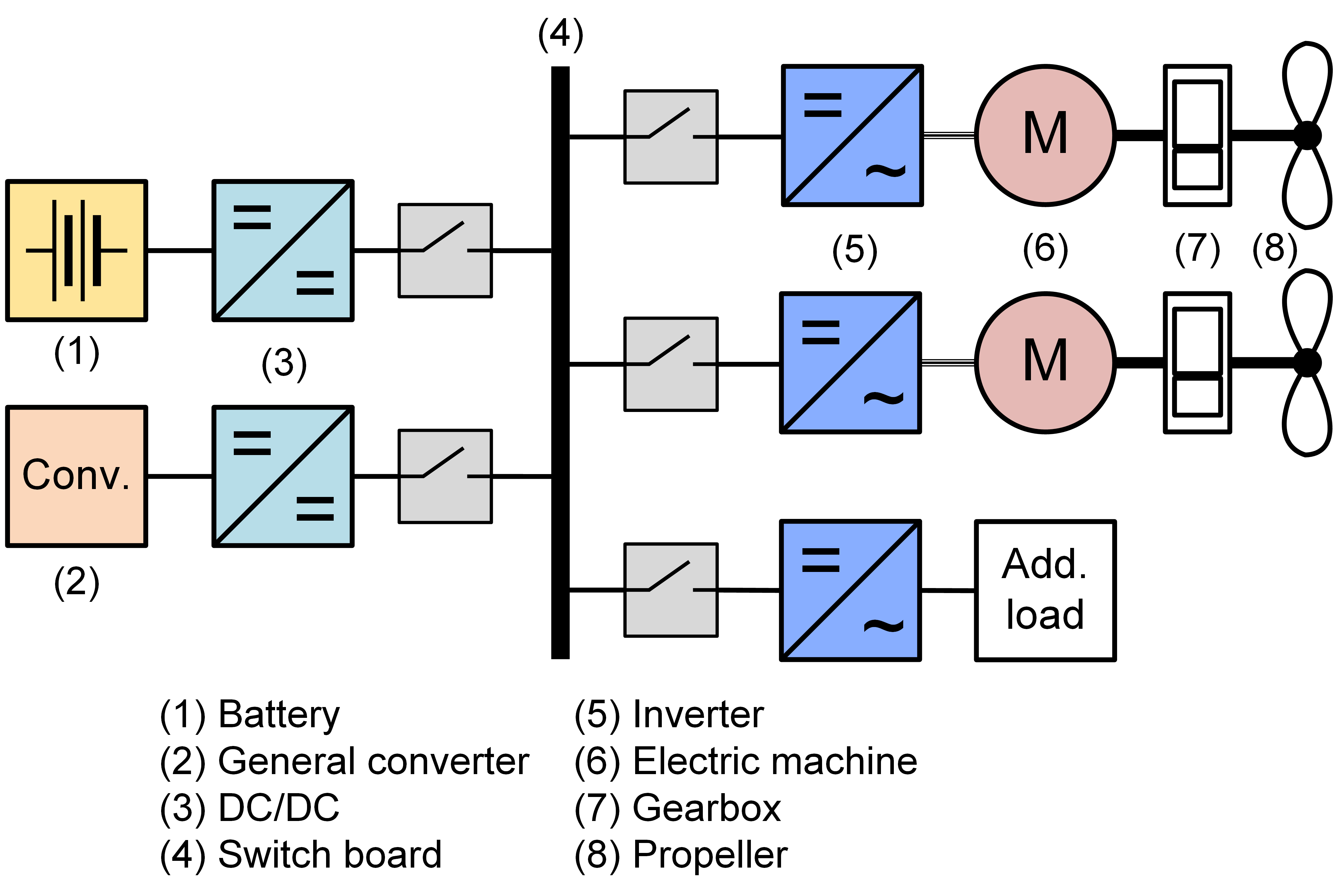}
\caption{Energy system layout.}
\label{fig:es_layout}
\end{figure}

\textbf{Electric machine input power.}
As seen in \figref{fig:es_layout}, the propeller is drive by the electric machine via a gearbox.
Both components, the gearbox and the electric machine, are lossy.
Based on the assumption of constant efficiencies, the electrical input power per electric machine is
\begin{equation*} 
    P_{\mathrm{EM,in}} = \frac{P_{\mathrm{EM,out}}}{\eta_{\mathrm{EM}}} = \frac{P_{\mathrm{p,in}}}{\eta_{\mathrm{EM}}  \eta_{\mathrm{g}}},
\end{equation*}
where $\eta_{\mathrm{EM}}$ and $\eta_{\mathrm{g}}$ are the efficiencies of electric machine and gearbox respectively, which are summarized in $\tilde{\eta}_{\mathrm{EM}} = \eta_{\mathrm{EM}}  \eta_{\mathrm{g}}$.
If the electric machine and its inverter are treated as one unit, the efficiency of the inverter $\eta_\text{inv}$ can be included in this term $\tilde{\eta}_{\mathrm{EM}} = \eta_{\mathrm{EM}}  \eta_{\mathrm{g}}  \eta_\text{inv}$.

To be compatible with equation \eqref{eq:dE_p} the power needs to be transformed to a purely mathematical force representing the change in energy over distance.
This leads to the electric machine input force
\begin{equation} \label{eq:dE_EM}
    \gls{F_em} = \frac{\gls{F_dE_p}}{\tilde{\eta}_{\mathrm{EM}}}.
\end{equation}
The index $in$ is once again omitted for the sake of readability.

\textbf{Electric machine speed limit.}
The electric machine model incorporates torque, power and speed limitations.
The shaft speed of the electric machine
\begin{equation} \label{eq:n_EM}
    n_\text{EM} = i_\text{g}  n_\text{p}    
\end{equation}
and its torque
\begin{equation*}
    Q_\text{EM} = \frac{\gls{Q_p}}{\eta_{\mathrm{g}}  i_\text{g}}
\end{equation*}
are derived using the transmission ratio of the gearbox $i_\text{g}$.

As the propeller shaft speed is not included in the model as a variable, the auxiliary variable from equation \eqref{eq:n_tilde} is used in equation \eqref{eq:n_EM} to formulate the upper limit for the shaft speed
\begin{equation} \label{eq:n_EM_lim}
    \gls{n_p} - \left( \frac{n_\text{EM,max}}{i_\text{g}} \right)^2 \leq 0,
\end{equation}
where $n_\text{EM,max}$ is the maximum electric machine shaft speed.

\textbf{Electric machine torque limit.}
With the maximum electric machine torque $Q_\text{EM,max}$ the torque limitation
\begin{equation} \label{eq:Q_EM_lim}
    \gls{Q_p} - Q_\text{EM,max}  \eta_{\mathrm{g}}  i_\text{g} \leq 0
\end{equation}
is given.
This is only accurate for speeds below the nominal speed $n_{\mathrm{EM,n}}$ because for higher speeds in the field-weakening region, the maximum torque decreases proportionally to the inverse shaft speed.
This behavior is depicted in \figref{fig:EM_char}, and it is common to both three-phase induction machines and synchronous permanent magnet machines, if they are controlled appropriately.

\begin{figure}
	\centering	
    \includegraphics[width=\columnwidth]{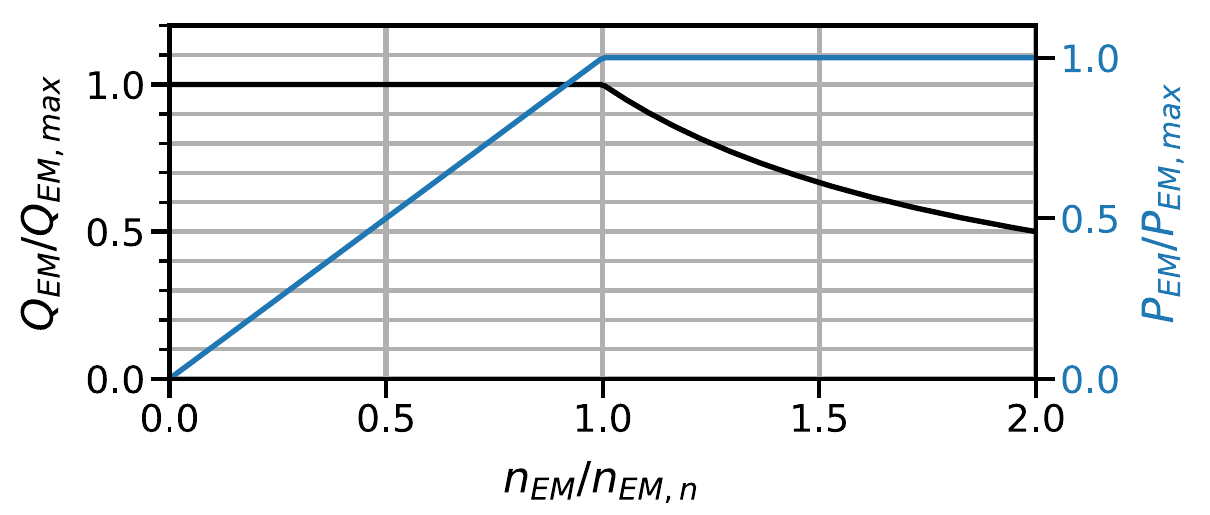}
	  \caption{Exemplary electric machine operating characteristics.}
	  \label{fig:EM_char}
\end{figure}

\textbf{Electric machine power limit.}
\figref{fig:EM_char} and the mechanical power equation
\begin{equation*}
    P_\text{EM,out} = 2\pi  Q_\text{EM}  n_\text{EM}
\end{equation*}
show that limiting the electric machine output power to a constant value in the field weakening region ensures a proper torque limitation for speeds above nominal speed.
With the inclusion of a constant torque limit and a constant electric machine output power limitation, the electric machine limits can be modeled over the whole relevant speed range.
Recalling the relationship between fictive force, power and inverse speed squared, the limitation for the maximum electric machine power $P_\text{EM,max}$ is approximated with a first-order Taylor polynomial
\begin{equation}
\begin{split} \label{eq:F_EM_max_approx}
    F_\text{EM,max} &= \frac{P_\text{EM,max}}{\sqrt{\gls{b}_\text{r}}} - \frac{P_\text{EM,max}}{2  \gls{b}_\text{r}^{1.5}}  \left( \gls{b} - \gls{b}_\text{r} \right)\\
    &= \frac{P_\text{EM,max}}{2  \sqrt{\gls{b}_\text{r}}}  \left( 3 - \frac{\gls{b}}{\gls{b}_\text{r}}\right)\\
    &= \frac{P_\text{EM,max}}{2}  \frac{\sqrt{\gls{s}'_{12}}}{\gls{v_s}_\text{,r}}  \left( 3 - \frac{\gls{s}'_{12}  \gls{b}}{\gls{v_s}[^2]_\text{,r}}\right)
\end{split}
\end{equation}
around the reference speed squared $\gls{b}_\text{r} = \gls{v_s}[^2]_\text{,r} / \gls{s}'_{12}$.
With this, the upper boundary is formulated as
\begin{equation} \label{eq:P_EM_max}
    \gls{F_em} - F_\text{EM,max} \leq 0.
\end{equation}

\subsubsection{Energy supply system model} \label{ess_model}
The energy supply system does not only need to provide the propulsive energy but also the hotel load \gls{P_aux}, which is considered constant during the whole trip.
Using this value, as well as the electric machine input force from equation \eqref{eq:dE_EM}, the overall fictive force balance is
\begin{equation} \label{eq:overall_energy_balance}
    k_\text{p}  \frac{\gls{F_dE_p}}{\tilde{\eta}_{\mathrm{EM}}} + \gls{P_aux}  \gls{y_t} + \gls{F_batd} =
    \gls{k_c}  \gls{F_c} + \gls{F_bat}  \eta_{\mathrm{DC/DC,batt}}
\end{equation}
including the energy converter output force \gls{F_c}, the battery output force \gls{F_bat} and the force of battery losses \gls{F_batd}. In \eqref{eq:overall_energy_balance}, a new auxiliary variable
\begin{equation} \label{eq:intro_yt}
    \gls{y_t} = \frac{1}{\sqrt{\gls{b}}}
\end{equation}
is introduced.
Since $1/\sqrt{\gls{b}}$ is a convex expression, the inequality
\begin{equation}
    \frac{1}{\sqrt{\gls{b}}} - \gls{y_t} \leq 0\
\end{equation}
can be included in the convex optimization problem formulation.

Since the electric machine input force is given per machine respectively per propeller, the number of propellers $k_p$ must be included to retrieve the overall demand for propulsion.
In addition, the efficiency for the DC/DC converter of the battery $\eta_{\mathrm{DC/DC,batt}}$ needs to be considered.

Due to the convex formulation of the fictive propulsive force in \eqref{eq:dE_p}, the fictive force balance \eqref{eq:overall_energy_balance} must be relaxed to
\begin{equation} \label{eq:force_balance_convex}
    k_\text{p}  \frac{\gls{F_dE_p}}{\tilde{\eta}_{\mathrm{EM}}} + \gls{P_aux}  \gls{y_t} + \gls{F_batd} -
    \left( \gls{k_c}  \gls{F_c} + \gls{F_bat}  \eta_{\mathrm{DC/DC,batt}}\right) \leq 0.
\end{equation}
In Section \ref{sec:battery_model}, it is shown in \eqref{eq:F_batd} that the expression for battery losses is also convex.

As the forces in equation \eqref{eq:overall_energy_balance} correspond to the output power, the respective efficiency of each energy supply component needs to be taken into account to determine the change in the actual energy stored within the respective energy storage.
The battery combines energy supply and storage in one component whereas the general energy converter only converts and supplies the energy stored in some kind of tank.

\subsubsection{General energy converter model}
In addition to the battery, the energy supply system includes $K$ general energy converters, e.g., fuel cells or generating sets comprising a generator and a diesel engine.
All $K$ converters are assumed to be of the same making.
The required power input $P_{\mathrm{c,in}}$, i.e., the power provided by the fuel, of one converter is calculated as a linear function of the output power $P_{\mathrm{c}}$,
\begin{equation*}
    P_{\mathrm{c,in}} = \glsuseri{a_c} + (\glsuserii{a_c} - 1)  P_{\mathrm{c}},
\end{equation*}
which can be directly transferred into the fictive internal force
\begin{equation*}
    \gls{F_ci} = \glsuseri{a_c}  \gls{y_t} + (\glsuserii{a_c} - 1)  \gls{F_c}.
\end{equation*}
The coefficients $\gls{a_c}_{,i}$ are determined by the part-load efficiency characteristics of the respective converter.

\textbf{Operation of multiple converters.}
With multiple converters of the same making, it is possible to switch off some of the converters for legs of the trip with low power demand to increase the efficiency of the converters running.
Including this decision in the optimization model would require integer decision variables, thereby destroying the beneficial structure of the convex optimization problem.
However, since zero-emissions must be modeled, the number of converters turned on $\gls{k_c} \in \naturals$ is included in \eqref{eq:overall_energy_balance}.
Additionally, for the legs of the trip with a speed limit, it is possible to limit the number of converters turned on based on the sum of the minimum power required to propel the vessel at the speed limit. 
As the location of both, zero-emission zones and legs of the trip with a speed limit, is fully determined by the path, $\gls{k_c}(\gls{sigma})$ can be calculated prior to the optimization. 

\textbf{Energy converter power limits.}
As for the electric machine, the power limit of the energy converter translates to a force limit. 
The affine expression for the upper boundary on the converter force
\begin{equation} \label{eq:converter_limits_approx}
     \gls{F_c} - \frac{P_\text{c,max}  \sqrt{\gls{s}'_{12}}}{2  \gls{v_s}_\text{,r}}  \left( 3 - \frac{\gls{s}'_{12}  \gls{b}}{\gls{v_s}[^2]_\text{,r}}\right) \leq 0
\end{equation}
is obtained by a first-order Taylor approximation around the reference speed $\gls{v_s}_\text{,r}$.
The lower boundary is simply given by
\begin{equation} \label{eq:converter_lower_limit}
     -\gls{F_c} \leq 0.
 \end{equation}

\subsubsection{Battery system model} \label{sec:battery_model}
The operational behavior of a battery is determined by various electrochemical processes.
Different modeling approaches exist, ranging from detailed electrochemical to “black box” ones.
A typical approach in engineering applications is the equivalent circuit model because it combines sufficient accuracy with low computational cost \cite{zhang_BatteryModellingMethods_2014}.

The equivalent circuit model consists of a voltage source representing the \gls{ocv} of the battery, an internal resistance modelling the linear part of the losses and one or more RC networks for modeling the dynamic characteristics.
As this work focuses on energy consumption and its resulting costs, the battery dynamics are not as relevant, and the RC networks are not implemented to simplify the model.
The resulting equivalent circuit is depicted in Figure~\ref{fig:bat_model}.
Another assumption regarding the battery model is the temperature independence of \gls{ocv} and internal resistance.
This is justified if proper cooling is implemented, which we assume to be the case.

\begin{figure}
\centering
\includegraphics[width=0.6\columnwidth]{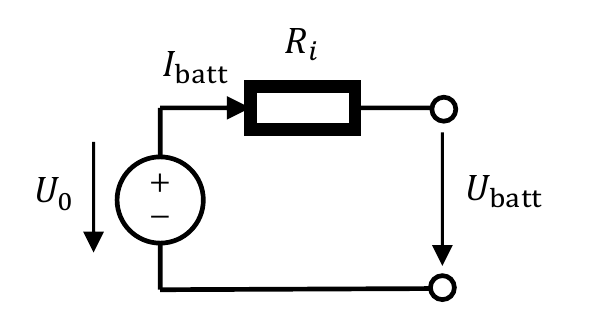}
\caption{Battery equivalent circuit model.}
\label{fig:bat_model}
\end{figure}

\textbf{Losses.}
The equivalent circuit is used to derive the load-dependent losses
\begin{equation} \label{eq:P_batd}
    P_\text{batt,d} = R_\text{i}  I_\text{batt}^2 = \frac{R_\text{i}}{U_0^2}  P_\text{batt}^2,
\end{equation}
which can be expressed as a function of the internal resistance $R_\text{i}$ and the battery current $I_\text{batt}$ as well as a function of the battery output power $P_\text{batt}$ and the \gls{ocv} $U_0$.
The right-hand side is derived by replacing the battery current with the quotient of battery output power and \gls{ocv}.
This is only possible if the voltage drop across the internal resistance is neglected.
This is a common approach when deriving battery models for convex optimization problems \cite{ma_ConvexModelingOptimal_2019, murgo_ConvexRelaxationsOptimal_2015}.
Other simplifications that are typical for convex battery modeling are the assumption of \gls{soc} -independent internal resistance and \gls{ocv} \cite{ma_ConvexModelingOptimal_2019, murgo_ConvexRelaxationsOptimal_2015, ebbes_TimeoptimalControlStrategies_2018, bordi_IncludingPowerManagement_2019}.

\textbf{Convexification of the losses.}
To add the battery model to the optimization model, equation \eqref{eq:P_batd} has to be transformed into the respective fictive force
\begin{equation} \label{eq:F_batd}
    \gls{F_batd} = \frac{R_\mathrm{i}}{U_0^2}  \gls{F_bat}  \sqrt{\gls{b}} = \frac{R_\mathrm{i}}{U_0^2}  \frac{\gls{F_bat}[^2]}{\gls{y_t}}.
\end{equation}

\textbf{\Gls{soc} update and capacity limits.}
In addition to the constraint regarding battery efficiency, several other constraints are required to model the battery operation accurately.
The first one is the battery energy \gls{E_bat} at an arbitrary point $l$ during the trip.
Using the initial battery energy at the beginning of the trip $E_{\mathrm{bat,0}}$, the change in battery energy content after travelling a certain distance is expressed as
\begin{equation*} 
    \gls{dE_bat}(l) = \gls{E_bat}(l) - E_{\mathrm{batt,0}}
\end{equation*}
resulting in the dynamics
\begin{equation*}
    \odv{\gls{dE_bat}}{\sigma} = - \gls{F_bat}\sqrt{\gls{s}'_{12}}.
\end{equation*}

The energy content of the battery, or the change in battery energy, respectively, is limited due to physical boundaries. 
Stricter boundaries,
\begin{subequations} \label{eq:E_bat_lim}
\begin{align}
    \gls{dE_bat}_{\mathrm{,min}} - \gls{dE_bat} &\leq 0,\\
    \gls{dE_bat} - \gls{dE_bat}_{\mathrm{,max}} &\leq 0
\end{align}
\end{subequations}
can be applied based on the minimum and maximum \gls{soc} of the battery, $\mathrm{SOC}_{\mathrm{min}}$ and $\mathrm{SOC}_{\mathrm{max}}$, respectively.
These are used to improve the battery lifetime and justify the constant voltage assumption in equation \eqref{eq:F_batd}.

\textbf{Battery power limits.}
Besides restricting the total battery energy, the battery output force also has to be limited.
This represents the power limit in the time domain.
With the maximum charge power $P_\text{cha, max}$ and the maximum discharge power $P_\text{dis, max}$ of the battery – both values are assumed to be non-negative – the force boundaries are approximated analogous to the one of the electric machine in \eqref{eq:F_EM_max_approx},
\begin{align*}
    F_\text{cha,max} &= \frac{P_\text{cha,max}}{2}  \frac{\sqrt{\gls{s}'_{12}}}{\gls{v_s}_\text{,r}}  \left( 3 - \frac{\gls{s}'_{12}  \gls{b}}{\gls{v_s}_\text{,r}^2}\right),\\
    F_\text{dis,max} &= \frac{P_\text{dis,max}}{2}  \frac{\sqrt{\gls{s}'_{12}}}{\gls{v_s}_\text{,r}}  \left( 3 - \frac{\gls{s}'_{12}  \gls{b}}{\gls{v_s}_\text{,r}^2}\right)
\end{align*}
resulting in the constraints
\begin{subequations} \label{eq:P_bat_lim}
\begin{align}
    -F_\text{cha,max} - \gls{F_bat} &\leq 0,\\ 
    \gls{F_bat} - F_\text{dis,max} &\leq 0
\end{align}
\end{subequations}
for limiting the output power of the battery in the convex optimization model.

\textbf{\gls{soc}-sustainability.}
Another constraint might be implemented depending on the desired type of operation regarding \gls{soc}-sustainability.
If the \gls{soc} of the battery should be the same at the end and the start of the trip the battery is operated \gls{soc}-sustaining.
In this case, the constraint
\begin{equation*} 
    \gls{dE_bat}(1) = 0
\end{equation*}
has to be added.

\subsection{Minimum time and energy problem in spatial domain}
Let the vector-valued function with the left-hand side of the given equations
\begin{equation*}
\begin{split}
    \Vec{g}(\gls{b},\glsuseri{u}) &=
    \left[ \begin{matrix} \eqref{eq:constraint_drift_force} & \eqref{eq:drag_convex} & \eqref{eq:boundaries_F_R_convex} & 
    \eqref{eq:rudder_drag} &
    \eqref{eq:z} & \eqref{eq:n_EM_lim} & \eqref{eq:Q_EM_lim} 
    \end{matrix} \right.\\
        &\qquad \quad \left. \begin{matrix} 
        \eqref{eq:P_EM_max} &
        \eqref{eq:force_balance_convex} & \eqref{eq:converter_limits_approx} & \eqref{eq:converter_lower_limit} & \eqref{eq:intro_yt} & \eqref{eq:E_bat_lim} & \eqref{eq:P_bat_lim}\end{matrix} \right]^\top
\end{split}
\end{equation*}
represent all the inequality functions in standard form.
With the state variables $\Vec{x} = \rowVecTrans{\gls{b} & \gls{dE_bat}}$ and the extended input vector
\begin{equation*}
\begin{split}
    \glsuseri{u} &= 
    \left[ \begin{matrix} \gls{F_D} & \gls{F_H} & \gls{F_R} & \gls{y_t} \end{matrix}\right.\\
        &\qquad \quad \left. \begin{matrix} \gls{D_R} & \gls{z}& \gls{n_p} & \gls{F_c} & \gls{F_bat} \end{matrix} \right]^\top
\end{split}
\end{equation*}
the minimum time and fuel motion planning problem is
\begin{align*}
     {\text{min.}} \quad & \int_{0}^{1} \left[ \left(\gls{k_c} \glsuseri{a_c} + \gls{w_t} \right) \gls{y_t} + \gls{k_c} \glsuserii{a_c} \gls{F_c} \,  \right] \text{d}\gls{sigma} \\
    \text{s.t. } \quad & \gls{R} \, \begin{bmatrix}
    \gls{T_p} - \gls{F_D} - \gls{D_R} \\
    \gls{F_H} + \gls{F_P} - \gls{F_R} \\
    \gls{F_H} + \gls{F_P} - \gls{F_R} \\
    \end{bmatrix} 
     = \gls{M} \, \left[ \gls{s}'\frac{\gls{b}'}{2} + \gls{s}'' \, \gls{b} \right] + \Vec{\tau}, \\
     & \gls{dE_bat}' = -\gls{F_bat}\sqrt{\gls{s}'_{12}},  \\
     & \Vec{g} \leq \vec{0}, \\
     & \gls{b}(0) = \gls{s}'_{12} v_\text{init}^2 \quad \gls{b}(1) =\gls{s}'_{12} v_\text{final}^2, \\
     & \gls{dE_bat}(0) = \gls{dE_bat}(1),
\end{align*}
where \gls{T_p} and \gls{F_P} are given in \eqref{eq:T_p_approx} and \eqref{eq:damp_force}, respectively. The first three  constraints are enforced for each $\sigma \in [0,1]$.

\subsection{Optimality conditions}
It is not obvious that the relaxations of \eqref{eq:z}, \eqref{eq:intro_yt} and \eqref{eq:overall_energy_balance} are valid, i.e., that they hold with equality at the optimum and the solution of the relaxed convex problem is the same as the solution of the original nonconvex problem.
\begin{theorem}
The relaxed inequalities \eqref{eq:z}, \eqref{eq:intro_yt} and \eqref{eq:overall_energy_balance} hold with equality at the optimum, i.e., 
\begin{align*}
    \gls{y_t}[^*] &= \frac{1}{\sqrt{\gls{b}[^*]}},\\
    \gls{z}[^*] &= \sqrt{\gls{s}'_{12}  \gls{b}[^*]  \gls{n_p}[^*]}, \\
    \gls{k_c}  \gls{F_c}[^*] &+ \gls{F_bat}[^*]  \eta_\mathrm{DC/DC,batt} =
        \gls{F_dE_p}[^*]  \frac{k_p}{\tilde{\eta}_{\mathrm{EM}}} + \gls{P_aux}  \gls{y_t}[^*]
\end{align*}
for $\sigma \in [0,1]$, if the propeller parameters satisfy the condition
    \begin{equation*}
        1 < \frac{a_{Q1}a_{T0}}{a_{Q0}a_{T1}}-\frac{a_{Q2}a_{T0}^2}{a_{Q0}a_{T1}^2}
    \end{equation*}
and the battery dissipation power satisfies the condition 
\begin{equation*}
     P_\mathrm{batt,d}(P_\mathrm{batt}) \leq \gls{P_aux}
\end{equation*}
and the electric machine operates below maximum speed limit 
\begin{equation*}
n_\text{EM} < i_\text{g}  n_p    
\end{equation*}
and below maximum torque limit
\begin{equation*}
    Q_\mathrm{EM} < \frac{\gls{Q_p}}{\eta_{\mathrm{g}}  i_\mathrm{g}}
\end{equation*}
and the converters deliver positive output power, which translates to the condition on force
\begin{equation*}
    \gls{F_c} > 0
\end{equation*}
for each $\sigma \in [0,1]$.
\end{theorem}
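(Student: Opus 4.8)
The plan is to exploit the convexity of the reformulated problem. Since it is a convex program admitting a strictly feasible reference trajectory, Slater's condition holds and the Karush--Kuhn--Tucker conditions --- equivalently, the Pontryagin conditions imposed pointwise in $\sigma$ --- are necessary and sufficient for optimality. It then suffices to show, for each of the three relaxed inequalities, that a feasible point at which that inequality has a strict gap admits a cost-decreasing feasible perturbation; optimality then forces equality (equivalently, the associated nonnegative multiplier is positive, or, in a degenerate subcase, the corresponding variable may be moved to the boundary at no cost). The three inequalities are handled by three different mechanisms, and the side conditions in the statement are precisely what stops a competing effect from reversing each argument.

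For \eqref{eq:intro_yt} I would argue by monotonicity. The variable $\gls{y_t}$ enters the objective through the nonnegative coefficient $\gls{k_c}\glsuseri{a_c}+\gls{w_t}$, does not appear in the mechanical dynamics, and enters the relaxed force balance \eqref{eq:force_balance_convex} only through the term $\gls{P_aux}\,\gls{y_t}$ and through the battery loss $\gls{F_batd}=(R_\mathrm{i}/U_0^{2})\,\gls{F_bat}^{2}/\gls{y_t}$ of \eqref{eq:F_batd}; the sensitivity of their sum to $\gls{y_t}$ is $\gls{P_aux}-P_\mathrm{batt,d}$, which is nonnegative precisely under the hypothesis $P_\mathrm{batt,d}(P_\mathrm{batt})\le\gls{P_aux}$. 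Hence decreasing $\gls{y_t}$ can neither raise the cost nor tighten any constraint other than \eqref{eq:intro_yt} itself, so $\gls{y_t}=1/\sqrt{\gls{b}}$ at the optimum. For the force balance \eqref{eq:overall_energy_balance}: if it were slack on a subinterval then, since the hypothesis gives $\gls{F_c}>0$ there, one may lower $\gls{F_c}$ slightly while keeping \eqref{eq:converter_lower_limit} and \eqref{eq:converter_limits_approx} satisfied, which strictly decreases the cost term $\gls{k_c}\glsuserii{a_c}\gls{F_c}$ --- a contradiction; on a zero-emission leg ($\gls{k_c}=0$) the same conclusion follows by transporting the surplus battery discharge through the SoC-sustaining terminal condition $\gls{dE_bat}(0)=\gls{dE_bat}(1)$ to a reducible converter load on an emitting leg.

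The main obstacle is \eqref{eq:z}. First note that $\gls{F_dE_p}$ occurs only on the left of the force balance (and not directly in the objective), so at the optimum it sits at its lower bound \eqref{eq:dE_p}. Suppose now that $\gls{z}<\sqrt{\gls{s}'_{12}\,\gls{b}\,\gls{n_p}}$ strictly on a subinterval. I would use the coupled perturbation that raises $\gls{z}$ while simultaneously raising $\gls{n_p}$ so that the thrust \eqref{eq:T_p_approx}, hence the entire mechanical dynamics, is held fixed; this forces $\mathrm{d}\gls{n_p}/\mathrm{d}\gls{z}=\glsuserii{a_T}/\glsuseri{a_T}$. Feasibility is preserved for a small move: the strict gap in \eqref{eq:z}, together with the assumed strict slack of the electric-machine speed limit \eqref{eq:n_EM_lim} and torque limit \eqref{eq:Q_EM_lim}, absorbs the perturbation, while \eqref{eq:drag_convex}, \eqref{eq:boundaries_F_R_convex}, \eqref{eq:rudder_drag}, \eqref{eq:P_EM_max} and the remaining constraints are unaffected or relaxed. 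The induced change in the energy lower bound is $\mathrm{d}\gls{F_dE_p}=(-\glsuseriii{k_dEp}-\glsuseri{k_dEp}\,\gls{n_p}^{2}/\gls{z}^{2})\,\mathrm{d}\gls{z}+(-\glsuserii{k_dEp}+2\glsuseri{k_dEp}\,\gls{n_p}/\gls{z})(\glsuserii{a_T}/\glsuseri{a_T})\,\mathrm{d}\gls{z}$, and the claim is that, after substituting the Appendix expressions for $\glsuseri{k_dEp},\glsuserii{k_dEp},\glsuseriii{k_dEp},\glsuseri{a_T},\glsuserii{a_T}$ in terms of the open-water coefficients $a_{\mathrm{T},i},a_{\mathrm{Q},i}$ and evaluating at the would-be active point $\gls{z}=\sqrt{\gls{s}'_{12}\,\gls{b}\,\gls{n_p}}$, strict negativity of this quantity is equivalent to $1<a_{Q1}a_{T0}/(a_{Q0}a_{T1})-a_{Q2}a_{T0}^{2}/(a_{Q0}a_{T1}^{2})$. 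Given $\mathrm{d}\gls{F_dE_p}<0$ we may lower $\gls{F_dE_p}$, then, by the force-balance argument above, lower $\gls{F_c}$ and the cost, contradicting optimality; hence \eqref{eq:z} binds.

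With the three relaxations tight, the rest is routine: the reconstructed trajectory satisfies $\gls{y_t}=1/\sqrt{\gls{b}}$, $\gls{z}=\sqrt{\gls{s}'_{12}\,\gls{b}\,\gls{n_p}}$ and the exact power balance, so it is feasible and optimal for the original nonconvex problem. I expect the delicate parts to be the algebra that collapses the sign condition on $\mathrm{d}\gls{F_dE_p}$ to exactly the stated propeller inequality, and the verification that the compensating change in $\gls{n_p}$ activates no other constraint and that the saving propagates cleanly through the force balance when $\gls{k_c}=0$ --- which is exactly why the statement is encumbered with its list of side conditions on the electric machine, the converter, and the battery.
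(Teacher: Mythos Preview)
Your approach is correct and is the primal counterpart of the paper's argument. The paper works directly with the augmented Hamiltonian: for each relaxed inequality it writes the stationarity condition $\partial H/\partial(\cdot)=0$ and shows that the associated multiplier is strictly positive, so the constraint is active by complementary slackness. You instead exhibit, whenever a relaxation is slack, a feasible cost-decreasing perturbation. In a convex problem with Slater's condition these two viewpoints are equivalent; your presentation is more constructive, the paper's more mechanical. In particular, for \eqref{eq:z} the paper assumes $\lMultIneq{\gls{z}}=0$, eliminates the common factor between $\partial H/\partial\gls{z}=0$ and $\partial H/\partial\gls{n_p}=0$, and obtains a quadratic in $w=\gls{n_p}/\gls{z}$ that must have a real root; the propeller inequality is precisely the statement that its discriminant is negative.

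There is one slip to repair in your treatment of \eqref{eq:z}. Evaluating the thrust-preserving derivative $\mathrm{d}\gls{F_dE_p}/\mathrm{d}\gls{z}$ ``at the would-be active point'' does not collapse it to a parameter-only expression, because the ratio $w=\gls{n_p}/\gls{z}$ still depends on the operating point even on the boundary. What you actually need is that this derivative is strictly negative for \emph{every} $w>0$. Along your direction it equals $-\glsuseri{k_dEp}\,w^{2}+2\glsuseri{k_dEp}(\glsuserii{a_T}/\glsuseri{a_T})\,w-\glsuseriii{k_dEp}-\glsuserii{k_dEp}\,\glsuserii{a_T}/\glsuseri{a_T}$, a downward parabola in $w$; it is negative for all $w$ iff its discriminant is negative, and after substituting the Appendix expressions for $\glsuseri{k_dEp},\glsuserii{k_dEp},\glsuseriii{k_dEp},\glsuseri{a_T},\glsuserii{a_T}$ this discriminant condition is exactly the stated propeller inequality. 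This is the very same quadratic (up to sign) that the paper derives, so your route and the paper's coincide at the algebraic core. With that correction your argument goes through.
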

\begin{proof}
Let the scalar function $f_\text{obj}$ represent the objective function of the problem and the vector function $ \Vec{f}_\text{dyn}$  the dynamics $\gls{b}'$ and $\gls{dE_bat}'$, respectively.
The augmented Hamiltonian of the problem is given by
\begin{equation*}
    H = f_\text{obj} + \Vec{\Psi}^\top \, \Vec{f}_\text{dyn} + \Vec{\lambda}^\top \, \Vec{g},
\end{equation*}
where $\Vec{\Psi}$ is a vector of costates and $\Vec{\lambda}$ is a vector of nonnegative Lagrange multipliers.

The necessary conditions for optimality given in \cite{bryso_AppliedOptimalControl_2018} are stated.
\begin{enumerate}[(i)]
\item Adjoint system:
\begin{equation*}
    \Vec{\Psi}' = - \frac{\partial H}{\partial \Vec{x}}.
\end{equation*}
\item Minimum Principle:
\begin{equation*}
    \frac{\partial H}{\partial \glsuseri{u}}= \Vec{0}.
\end{equation*}
\item Complementary slackness:
\begin{equation*}
    \Vec{\lambda} \, \Vec{g} = \Vec{0}.
\end{equation*}
\end{enumerate}

\textbf{Force balance.} 
The Minimum Principle states that
\begin{equation*}
    \pdv{H}{\gls{F_c}} = \gls{k_c} \glsuserii{a_c} + \lambda_{F_\text{c,max}} - \gls{k_c} \lMultIneq{ESS} = 0,
\end{equation*}
where $\lMultIneq{ESS}$ and $\lambda_{F_\text{c,max}}$ are the elements of the vector $\lambda$ corresponding to the force balance and converter maximum force limit. Since $\lMultIneq{ESS} = \glsuserii{a_c} + \lambda_{F_\text{c,max}}/\gls{k_c} > 0$, complementary slackness implies that the inequality \eqref{eq:force_balance_convex} holds with equality at the optimum.

\textbf{Inverse speed squared.}
Similarly expanding the Minimum Principle gives
\begin{equation*}
    \pdv{H}{\gls{y_t}} = \gls{k_c} \glsuseri{a_c} + \gls{w_t} - \lMultIneq{\gls{y_t}} + \lMultIneq{ESS} \left( \gls{P_aux} - \frac{R_{\mathrm{i}}}{U_0^2} \frac{\gls{F_bat}[^2]}{\gls{y_t}[^2]} \right) = 0,
\end{equation*}
where $\lMultIneq{\gls{y_t}}$ is the Lagrange multiplier corresponding to the inequality \eqref{eq:z}. It follows that $\lMultIneq{\gls{y_t}} > 0$, if
\begin{equation*}
    \gls{k_c} \glsuseri{a_c} + \gls{w_t} + \left( \glsuserii{a_c} + \frac{\lambda_{F_\text{c,max}}}{ \gls{k_c}} \right) \left( \gls{P_aux} - \frac{R_i}{U_0^2} \frac{\gls{F_bat}[^2]}{\gls{y_t}[^2]} \right) > 0
\end{equation*}
holds. The condition reduces to
\begin{equation*}
    P_{\mathrm{batt,d}}(P_{\mathrm{batt}}) \leq P_{\mathrm{aux}} < P_{\mathrm{aux}} + \frac{ \gls{w_t} + \gls{k_c} \glsuseri{a_c}}{\glsuserii{a_c} +\lambda_{F_\text{c,max}}/\gls{k_c}}
\end{equation*}
since the last term on the right-hand side is positive.

\textbf{Propeller.}
Expanding $\partial H/\partial \gls{z} $ and $\partial H/\partial \gls{n_p}$ yields

\begin{align*}
     \dfrac{\partial H}{\partial z } & = - k_\text{xy}  \glsuserii{a_T} + \lMultIneq{\gls{z}} \\
     &\quad + \lMultIneq{ESS}  \frac{k_\text{p}}{\tilde{\eta}_{\mathrm{EM}}}  \left( - \glsuseriii{k_dEp}  -\frac{\gls{n_p}[^2]}{\gls{z}^2} \right) =0, \\
    \dfrac{\partial H}{\partial \gls{n_p} } & = k_\text{xy}  \glsuseri{a_T} - \lMultIneq{\gls{z}}  \frac{\gls{s}_{12}^{'2}  \gls{b}}{2\sqrt{ \gls{s}_{12}^{'2}  \gls{b}}  \gls{n_p}}\\
    &\quad + \lMultIneq{ESS}  \frac{k_\text{p}}{\tilde{\eta}_{\mathrm{EM}}}  \left( - \glsuserii{k_dEp} + \glsuseri{k_dEp} \frac{\gls{n_p}}{z} \right) =0,
\intertext{where}
    k_\text{xy} & = \left( \Psi_\text{x}  \frac{2}{\gls{s}'_1  \gls{m_vessel}}  \cos{\theta} + \Psi_\text{y}  \frac{2}{\gls{s}'_2  \gls{m_vessel}}  \sin{\theta} \right).
\end{align*}
Let $\lMultIneq{\gls{z}}^*=0$, which implies $z^* < \sqrt{(\gls{s}_{12}^{'2}\gls{b}[^*]\gls{n_p}[^*])}$ according to complementary slackness. By solving the expressions above with respect to $k_\text{xy}  \tilde{\eta}_{\mathrm{EM}}/\lMultIneq{ESS}  k_\text{p} $ and equating them, we obtain
\begin{equation*}
        \frac{\glsuseri{k_dEp}}{\glsuserii{a_T}}  \frac{\gls{n_p}^2}{\gls{z}^2} -
        2  \frac{\glsuseri{k_dEp}}{\glsuseri{a_T}}  \frac{\gls{n_p}}{\gls{z}}
        + \frac{\glsuseriii{k_dEp}}{\glsuserii{a_T}} + \frac{\glsuserii{k_dEp}}{\glsuseri{a_T}} = 0,
\end{equation*}
which is a quadratic equation in $\gls{n_p}/\gls{z}$.
Real valued roots only exist, if
\begin{equation*}
    4  \left( \frac{\glsuseri{k_dEp}}{\glsuseri{a_T}} \right)^2 - 4  \frac{\glsuseri{k_dEp}}{\glsuserii{a_T}}  \left(\frac{\glsuseriii{k_dEp}}{\glsuserii{a_T}} + \frac{\glsuserii{k_dEp}}{\glsuseri{a_T}} \right) \geq 0
\end{equation*}
holds, which simplifies to
\begin{equation*}
    \frac{\glsuseri{k_dEp}  \glsuserii{a_T}}{\glsuseri{a_T}^2} - \frac{\glsuseriii{k_dEp}}{\glsuserii{a_T}} - \frac{\glsuserii{k_dEp}}{\glsuseri{a_T}} \geq 0,
\end{equation*}
since $\glsuseri{k_dEp} > 0$.
If this condition does not hold, $\lMultIneq{\gls{z}} = 0$ is infeasible, and \eqref{eq:z} holds with equality at the optimum. 
\end{proof}
\begin{remark}
    In instances where the converter is off or idling, $\gls{F_c}=0$ and the Lagrange multiplier $\lambda_{F_\text{c,min}}$, corresponding to the converter minimum force limit, enters into the term $ \partial{H}/\partial\gls{F_c}$
    \begin{equation*}
        \pdv{H}{\gls{F_c}} = \gls{k_c}  \glsuserii{a_c} - \lambda_{F_\mathrm{c,min}} - \gls{k_c}  \lMultIneq{ESS} = 0
    \end{equation*}
    which does not necessitate $\lMultIneq{ESS} > 0$. This case can be treated by the approach introduced in \cite{murgo_ConvexRelaxationsOptimal_2015}, which splits the objective function integral to instances under electrical operation and instances with positive converter power.
\end{remark}

\section{Path}
\subsection{Path parametrization as a parametric polynomial curves}
We employ B\'ezier curves, a type of parametric polynomials, to parametrize the path function $s$ through a finite number of decision variables. B\'ezier curves are commonly used in higher-level algorithms for planning obstacle-avoiding trajectories. The curves exhibit favourable properties \cite{MPW+22,LSB09}:
\begin{enumerate}
    \item every point on the curve is a convex combination of the control points,
    \item every point on the curve is contained within the convex hull of the control points.
\end{enumerate}
The first property reduces the collision avoidance problem to a tractable convex optimization problem for placing the control points inside convex safe regions, that is, outside of obstacles. The second property ensures that the path is always feasible given that the control points are contained in safe regions.

 The curve $B$ is defined as a convex combination of points $P_0,...,P_n$, called \textit{control points}
\begin{equation*}
    B(\sigma) = \sum_{i=1}^n \Theta_{i,n}(\sigma)P_i,
\end{equation*}
where $\sigma \in [0,1]$. The coefficients of the convex combination are polynomials of degree $n$: 
\begin{equation*}
    \Theta_{i,n}(\sigma)= \begin{pmatrix} n \\ i  \end{pmatrix} \sigma^i (1-\sigma)^{n-i}, \ i=0,...,n.
\end{equation*}

The first and second derivatives of the path are needed for computing the magnitudes of forces due to translational motion, and for computing the orientation angle and angular speed. Moreover, the third derivative of the path is required for computing angular acceleration. To this end, we will make use of the property that the derivatives of $B$ are themselves B\'ezier curves, which allows us to compute the derivatives exactly at the discretization points $\sigma_0,...,\sigma_N$ in the problem implementation.

The $k$th derivative is 
\begin{equation} \label{eq:bezier_derivative}
    B^{[k]}(\sigma)=n(n-1)(n-2)\ldots (n-k+1) \sum_{i=0}^{n-k} \Theta_{n-k,i}(\sigma)D_i^k,
\end{equation}
where $D_i^k$ is the finite difference
\begin{align*}
    D_i^k & = D_{i+1}^{k-1} - D_{i}^{k-1}, \ i=0,...,n-k, \\
    D^1_{n-1} & = P_n - P_{n-1}.
\end{align*}
We will require that the curve has at least degree four, which ensures that it is at least three times differentiable. 

\subsection{Orientation and its derivatives}
The orientation of the vessel is expressed with respect to the positive $x$-axis as a function of the path coordinate $\sigma$:
\begin{equation*}
    s_3(\sigma) = \theta(\sigma) = \text{atan2}(s'_2, s'_1).
\end{equation*}
The derivative of $\theta$ with respect to $\sigma$ depends on $\sigma$ via both $s'_2$ and $s'_1$. Using the formula for total derivative gives
\begin{align*}
     s'_3  = \frac{d\theta}{d\sigma} & =\left( \frac{\partial}{\partial s'_2} \text{atan2}(s'_2, s'_1) \right) \frac{d s'_2}{d\sigma} \\
     & + \left( \frac{\partial}{\partial s'_1} \text{atan2}(s'_2, s'_1) \right) \frac{d s'_1}{d\sigma} \\
      & = \frac{s'_1}{s_{1}^{\prime 2} + s_{2}^{\prime 2}} s_2'' - \frac{s'_2}{s_{1}^{\prime 2} + s_{2}^{\prime 2}} s_1''.
\end{align*}
The second derivative is obtained by computing the total derivative of the above expression with respect to $\sigma$:
\begin{equation*}
    s''_3  = \frac{ s_2's_1'' k_1}{k_2^2} - \frac{ s_1's_2'' k_1}{k_2^2} - \frac{s_2' s_1'''}{k_2} + \frac{s_1' s_2'''}{k_2},
\end{equation*}
where
\begin{align*}
    k_1 & = 2s_1's_1'' + 2s_2's_2'', \\
    k_2 & = s_1^{\prime 2} + s_2^{\prime 2}.
\end{align*}

\section{Validation}
\subsection{Comparison of propeller models}
In this section, \gls{poly2} fitting of \gls{poly2} of \gls{kt} is evaluated for all 180 propellers of the Wageningen B-Series.
The configuration combinations in terms of blade number and expanded area ratio are given in \cite{vanl_WageningenBScrewSeries_1969}.
The pitch diameter ratio is varied between 0.6 and 1.4 in steps of 0.1. 

The \gls{poly2} of \gls{kt} and \gls{kq} is compared with a linear fit and a \gls{poly3}, which are typically used in marine vessel simulation models \cite{smoge_ControlMarinePropellers_2006, hausl_NovelFourQuadrantPropeller_2015}.
The average relative errors of \gls{kt}, \gls{kq} and the open water efficiency compared to the original curves based on equations \eqref{eq:K_T_poly} and \eqref{eq:K_Q_poly} are calculated for all propellers and all approximation variants.

\begin{figure}[h]
	\centering
	    \includegraphics[width=\columnwidth]{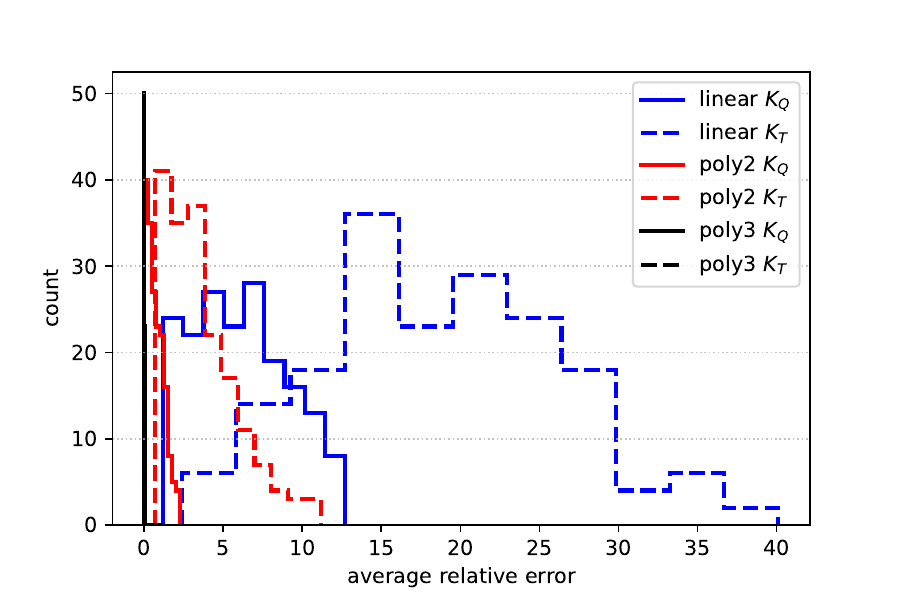}
	    \caption{Average relative error of thrust and torque coefficients for different approximation methods.}
	    \label{fig:hist_kt_err_avg}
\end{figure}

The histogram of average errors (Figures~\ref{fig:hist_kt_err_avg}) indicates that the \gls{poly2} achieves a lower average relative error for most propellers of the Wageningen B-Series than the linear fit.
The best results are attained with the \gls{poly3} which achieves an average relative error of less than 1\% in all three categories.
However, the \gls{poly3} cannot be integrated into the presented convex optimization model, and the \gls{poly2} delivers acceptable results.

Approximately 77\% of the \glspl{poly2} achieve an average relative error of less than 5\% in all three categories.
This is deemed tolerable since the motion planning model includes more prominent sources of error originating from the approximation of the hydrodynamic forces.
In addition, the average relative error for the \gls{poly2} is driven to these comparably high values due to the large deviation for high advance coefficients.
The operating points of a properly selected propeller are concentrated at the apex of the open water efficiency curve. Thus, the \gls{poly2} and the linear fit do not deviate as much from the \gls{poly3} for most advance coefficients as the average relative error suggests.
The relative error of \gls{poly2} around the most common operating point could be decreased further by approximating the \gls{kt} and \gls{kq} coefficients at the point of the most frequent operation since the resulting larger relative errors in other regions are acceptable.

\subsection{Hull form}
The 5415 modern surface vessel hull form is considered as our test case (Figure \ref{fig:hull_form}). The 5415 hull form is an open-source design established for benchmarking maneuvering simulation methods. Two open-water propellers, driven by strut-supported shafts, provide propulsion. The hull geometry and relevant loading conditions and speeds are described in Table \ref{tbl:ship_params}.
\begin{figure}
    \centering
    \includegraphics[width=0.5\textwidth]{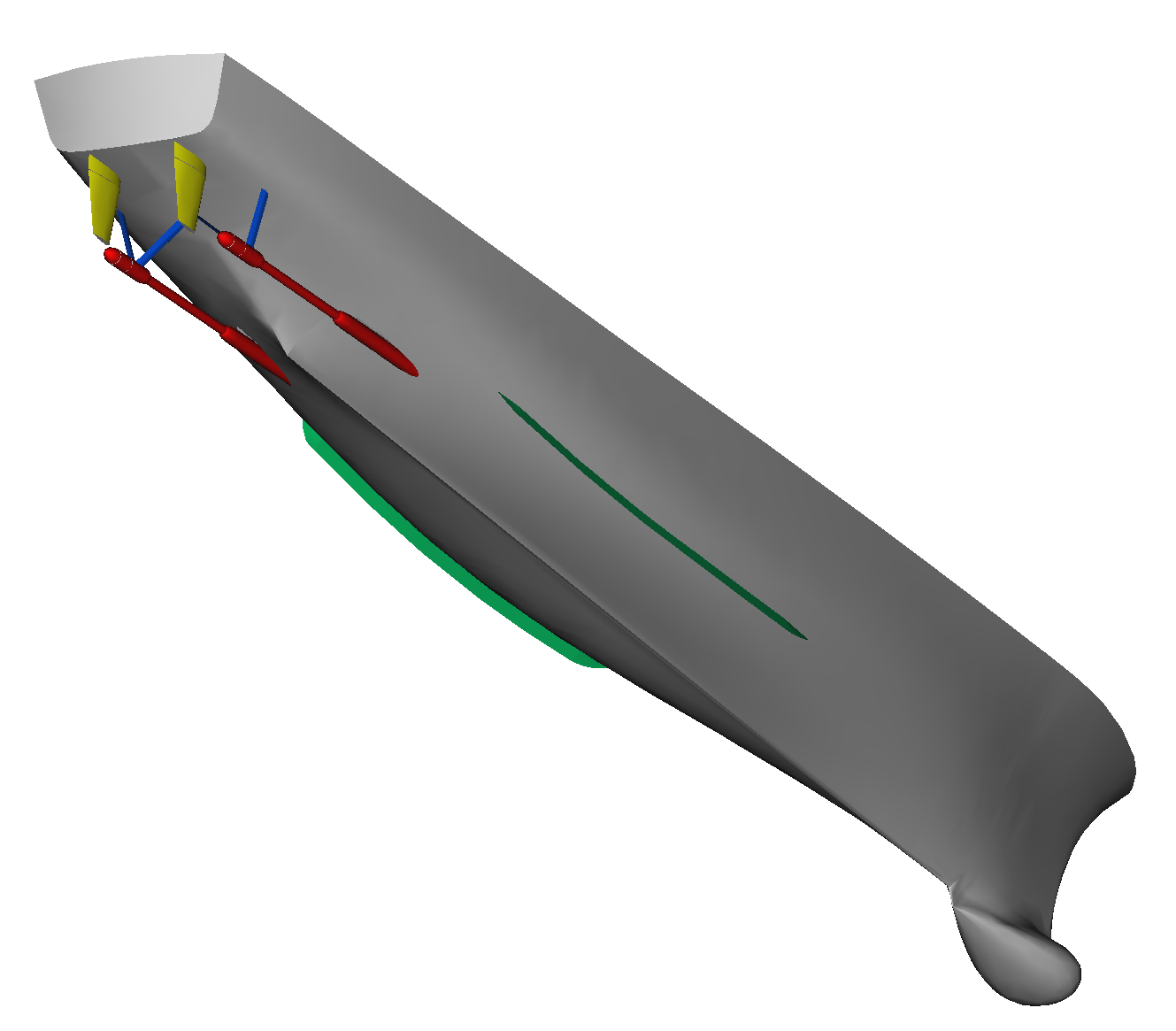}
    \caption{5415 hull form appended with bilge keels. The hull geometry includes a sonar dome in the bow.}
    \label{fig:hull_form}
\end{figure}
Captive and free model tests have been conducted for the 5154 hull in the towing tank of Maritime Research Institute Netherlands (MARIN). Captive tests target accurate measurement of hull forces and moments, while the IMO standard zig-zag maneuvers intended for assessing vessel maneuverability, are conducted with free sailing model. Test specifications are reported in \cite{Mar14a,Mar14b}. We will employ the physical model test data to benchmark against the convexified equations for yaw, drift and rudder forces and moments.
\begin{table}[h!]
\caption{Values of selected fixed parameters in the numerical examples.}\label{tbl:ship_params}
\begin{tabular*}{\tblwidth}{@{}lll@{}}
\toprule
    $L_\text{pp}$ = 4.002 m                     & $D$ = 0.173 m                          & $i_\text{g}$ = 0.1 \\
    $B_\text{wl}$ = 0.537 m                     & $f_\text{w}$ = 0.2                     & $\Omega$ = 20.2 \\
    $T$ = 0.173 m                               & $a_\text{T0}$ = 0.3                    &  $A_\text{R}$ = 0.012 m\textsuperscript{2}  \\
    $L_\text{R}$ = 1.85 m                        &  $a_\text{T2}$ = 0.35                 & $b_\text{R}$ = 0.124 m \\
    $L_\text{H}$ = 1 m                          &  $a_\text{Q0}$ = 0.041                 & $U_0$ = 48 V  \\
    $L_\text{P}$ = 9.5 m                        &  $a_\text{Q2}$ = 0.041                 & $R_\text{i}$ = 53 m$\Omega$ \\
    $\nabla$ = 0.189 m\textsuperscript{3}       &  $n_\mathrm{EM,max}$ = 1566 rad/s   & $E_{\mathrm{batt,max}}$ = 10 Wh \\
    $A_\text{S}$ = 2.361 m\textsuperscript{2}    &  $Q_\mathrm{EM,max}$ = 102 mNm     & $E_{\mathrm{batt},0}$ = 5 Wh\\
    $I_\text{zz}$ = 201 kg m\textsuperscript{2} &  $P_\mathrm{EM,max}$ = 60 W          & $\eta_{\mathrm{DC/DC,batt}}$ = 95\% \\
     $m$ = 189 kg                               &  $P_\mathrm{G,max}$ = 50 W           & $\eta_{g}$ = 98\%  \\
    $A_\text{L1}$ = 0.42                        &  $a_\mathrm{c0}$ = 0.174 mg/s                 & $P_{\mathrm{aux}}$ = 1 W \\
    $S$ = 1.18                                  &  $a_\mathrm{c1}$ = 0.945 mg/J                & $\rho$ = 997 kg/m\textsuperscript{3} \\
\bottomrule
\end{tabular*}
\end{table}
\subsection{Captive measurements}
Figure \ref{fig:benchmark} compares predictions of the mathematical model parameterized according to Table \ref{tbl:ship_params} to captive test measurements. Test results for hull hydrodynamic forces and moments are available for two speeds: 1.53 (9.26) m/s and 0.93 (5.56) m/s in model (full) scale. Test results for the rudder are available only for the higher speed at a number of discrete points. 

The measurements indicate that the rudder stalls when the angle of attack reaches approximately 20°, developing less lift thereafter. Since the mathematical model does not capture the behaviour of a stalled rudder, the maximum angle of attack is limited to 20°, and only values up to this angle are reported.
\begin{figure*}
    \centering
    \includegraphics[width=0.9\textwidth]{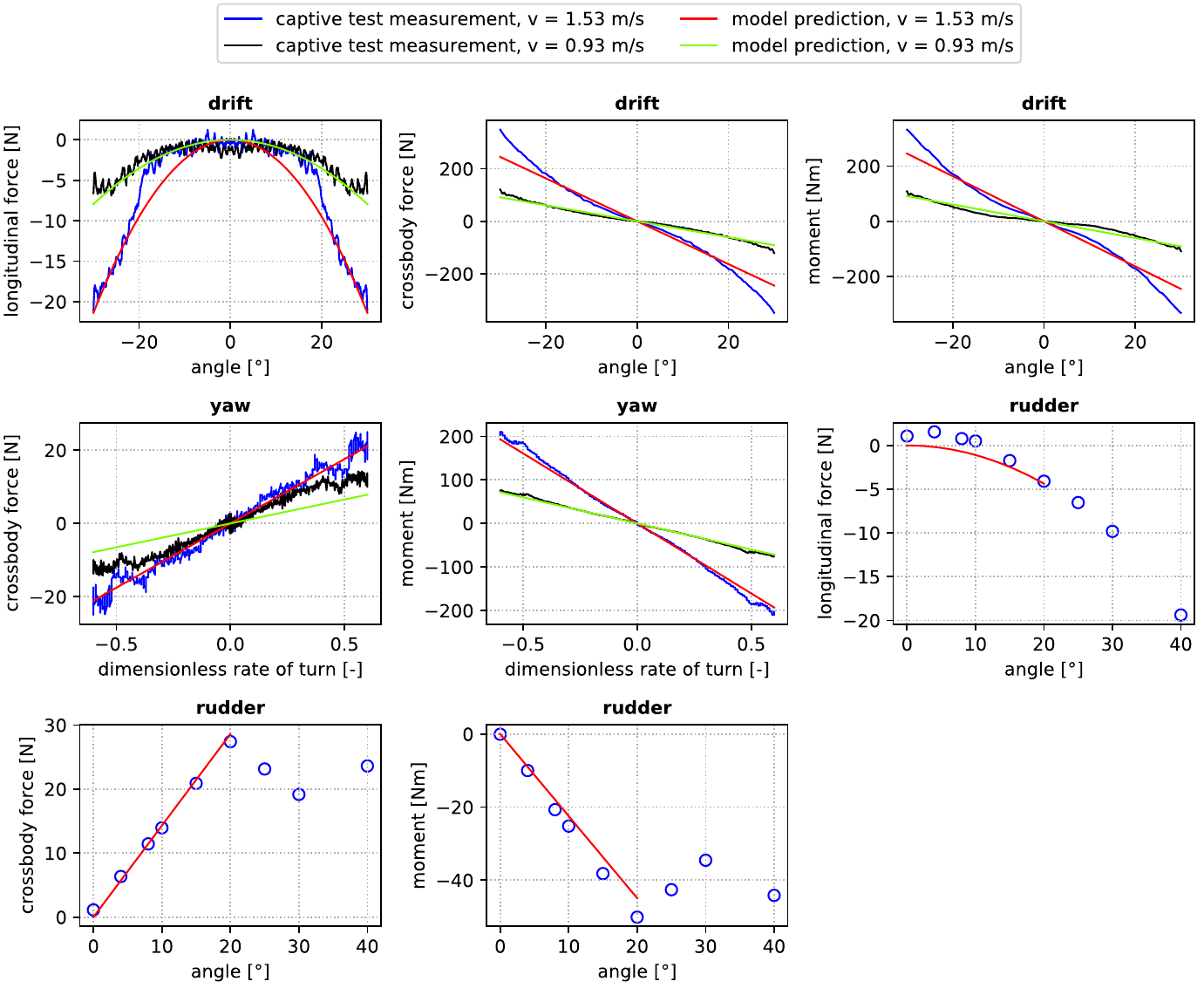}
    \caption{Comparison of captive test measurements and mathematical model predictions of hull and rudder hydrodynamic forces and moments.}
    \label{fig:benchmark}
\end{figure*}

The comparison shows good agreement of crossbody force and moment due to drift at low and moderate drift angles, while an underestimation of both force and moment are observed at a high angle and high speed. In contrast, the agreement of longitudinal force is good for high angles and poor for low angles. An underestimation of crossbody force is also observed in yawing at a low speed. Otherwise, the yaw force and moment are captured accurately by the mathematical model. 

The measured rudder force and moment exhibit near-linear response within the nominal operating range. Thus, the linear rudder model can be deemed accurate.
\subsection{Zig-zag maneuvering test}
The zig-zag maneuver is a typical maneuvering test that is performed to assess yaw response characteristics. The test begins at a steady speed with zero rudder angle. The rudder is then deflected to 20°. Once the vessel has turned 20°, the rudder is deflected to -20° and held until the vessel has turned to -20° with respect to the initial heading. The steps are repeated until steady oscillation is reached. 

The motion of the vessel, described by the convex hydrodynamic force elements, in the 20/20 zig-zag maneuver is predicted by numerical simulation and compared to tank test data of the same maneuver. In this case, the equations of motion (\ref{eq:time_dynamics}) in the time domain are numerically integrated using second-degree Runge-Kutta method with step size 0.01. 

The time histories of orientation and rate of turn show that the turning characteristics of the vessel are captured accurately by the convex mathematical model (Figure \ref{fig:benchmark}). However, the model underestimates drag due to drift, which is observed as a lag of the simulated vessel in the $x-y$-position subplot, and lower speed reduction.
\begin{figure*}
    \centering
    \includegraphics[width=0.8\textwidth]{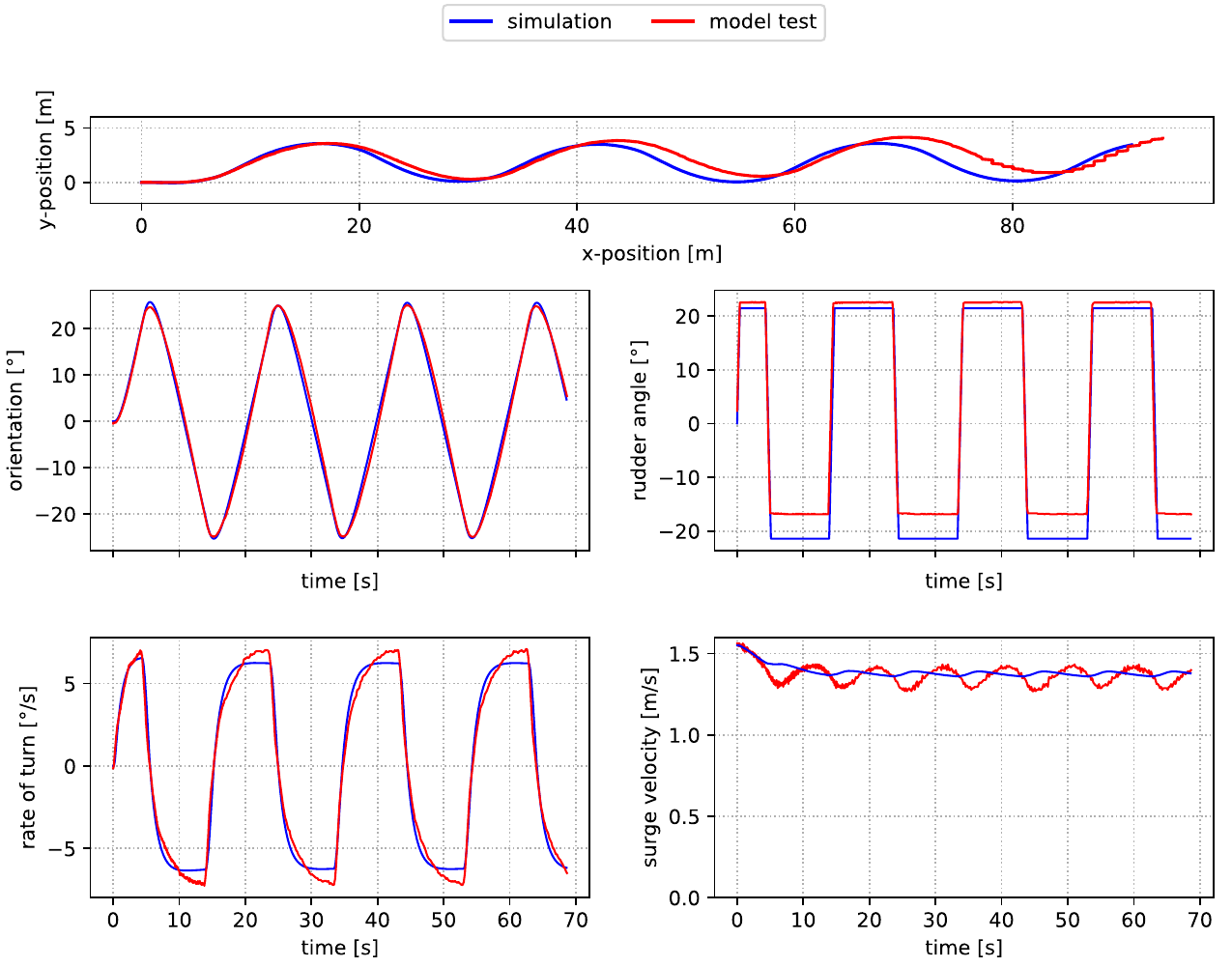}
    \caption{20/20 zig-zag maneuver comparison.}
    \label{fig:zz20_comparison}
\end{figure*}

\begin{figure*}
    \centering
    \includegraphics[width=\textwidth]{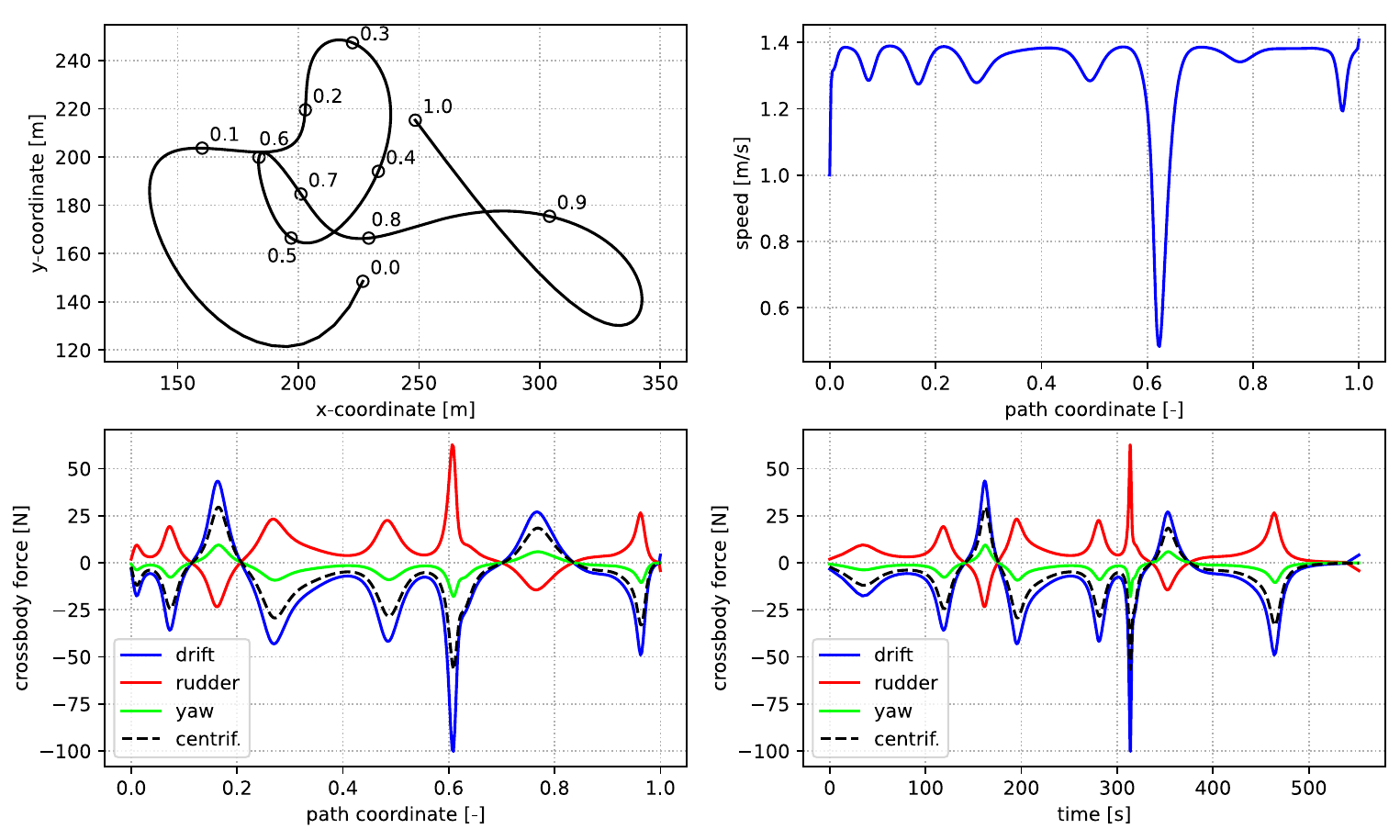}
    \caption{Randomly generated vessel path (top left), vessel speed (top right) and hull forces (bottom).}
    \label{fig:bezier_path_sol}
\end{figure*}

\begin{figure*}
    \centering
    \includegraphics[width=1.0\textwidth]{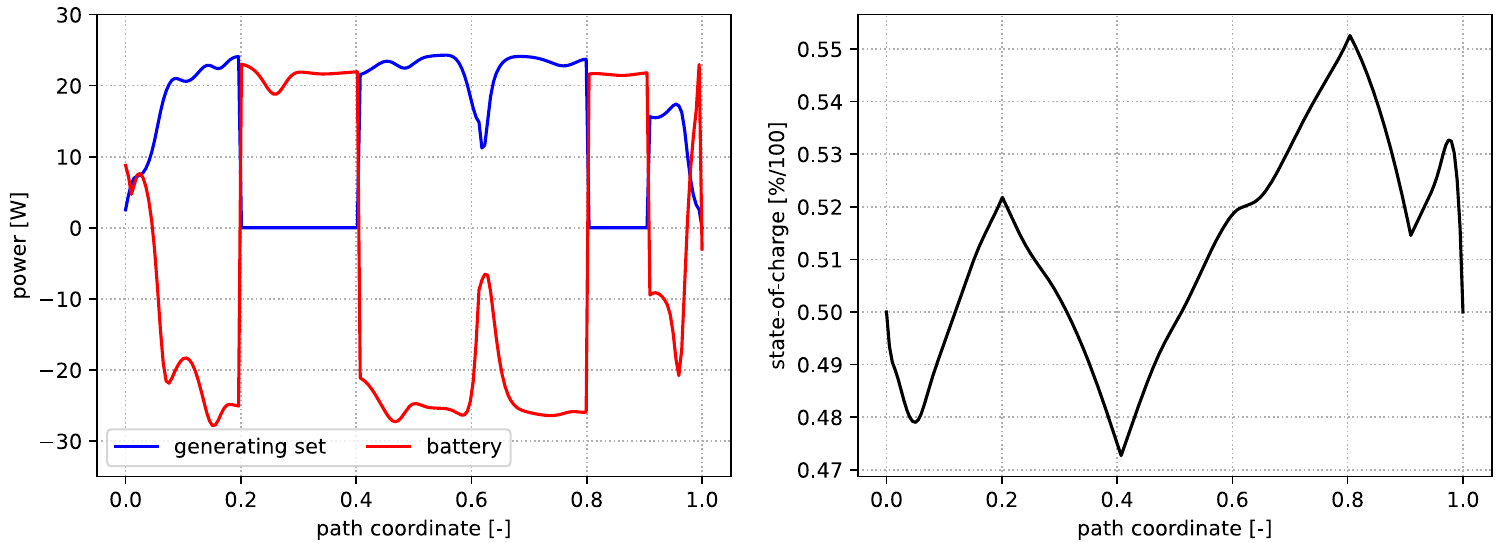}
    \caption{Solution of the problem instance with converter power limited sailing legs.}
    \label{fig:zem_sol}
\end{figure*}

\section{Numerical examples}
To make use of a numerical optimization algorithm, we create a finite dimensional problem by discretizing $\Vec{x}(\sigma)$ and $\glsuseri{u}(\sigma)$ at $n+1$ evenly spaced points such that
\begin{equation*}
    \sigma_{i+1} - \sigma_{i} = d\sigma
\end{equation*}
for all $i$. The finite difference approximation of the derivative is
\begin{equation*}
    \Vec{x}'(\sigma_i) \approx \frac{\Vec{x}(\sigma_{i+1})-\Vec{x}(\sigma_{i}) }{d\sigma}, \, i=1,...,n.
\end{equation*}
The path derivatives $s'(\sigma)$ and $s^{''}(\sigma)$ are evaluated at the discretization points according to (\ref{eq:bezier_derivative}).
The finite-dimensional problem is formulated with the CVXPY \cite{DB16} interface and solved with the primal-dual interior point algorithm implemented in the solver ECOS \cite{DCB13}. All the problem instances are solved in less than 0.5 s with $n=399$.

Figure \ref{fig:bezier_path_sol} (top left) depicts the polynomial curve that characterizes the vessel path in all problem instances. The polynomial is defined by 40 randomly generated control points. Although the discretization points of the path coordinate $\sigma$ are evenly spaced $(\sigma_{i+1} - \sigma_{i} = d\sigma)$, the points on the path $s(\sigma_i)$ need not be. This is evident in Figure \ref{fig:bezier_path_sol} that shows 11 path coordinate points along the path. Finally, Table \ref{tbl:ship_params} lists the fixed parameter values applied in all problem instances.

The optimal solution with initial speed 1.0 m/s is observed in Figure \ref{fig:bezier_path_sol}, which shows the time histories of vessel speed and forces acting on the hull and rudder.
The rest of this section investigates how the results vary for different requirements and input parameters.

\subsection{Tradeoff between voyage duration and fuel consumption}
The objective function of the hybrid vessel problem is a scalar objective that is a sum fuel consumption and voyage time objectives multiplied by the weight $\omega_T$. We aim to delimit a trade-off surface between these competing objectives, that is, the Pareto optimal points. Regardless of the prioritization (value of $\omega_T$) of each objective, if a given point is an optimal point for the problem, then the point is also Pareto optimal due to the convexity of the problem \cite{boyd_ConvexOptimization_2004}. This insight enables one to construct the Pareto surface by solving a sequence of problems with varying weights.
\begin{figure}
    \centering
    \includegraphics[width=\columnwidth]{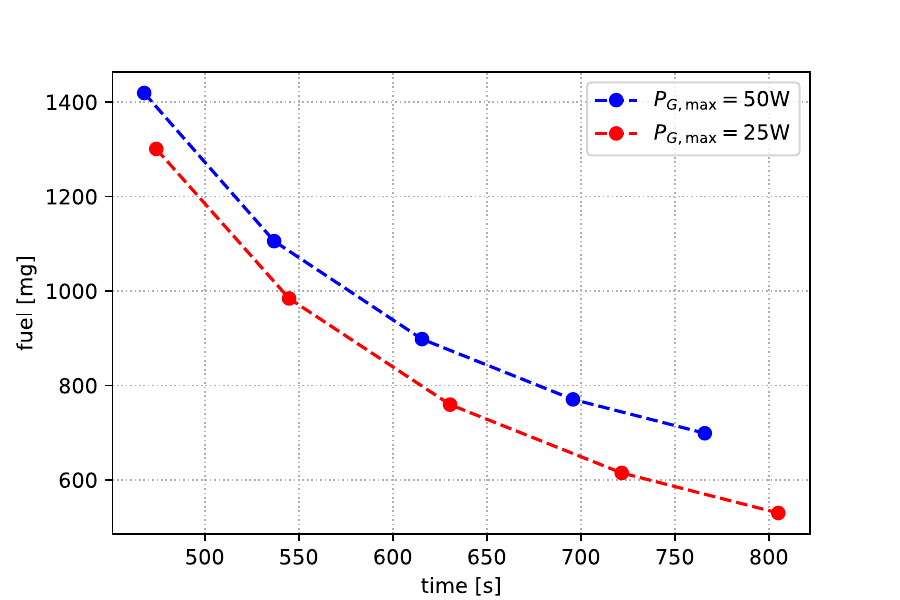}
    \caption{Trade-off between voyage total fuel consumption and sailing time}
    \label{fig:tradeoff}
\end{figure}

Figure \ref{fig:tradeoff} illustrates the trade-off curve for two vessels with converter configurations $P_\mathrm{G,max}=25$ and $P_\mathrm{G,max}=50$, but otherwise identical. The speed of modern convex program solvers allows generating such trade-off curves in only a few seconds.

The parameters $a_\mathrm{c0}$ and $a_\mathrm{c1}$ for the smaller converter were obtained by scaling the parameters of the larger converter such that both exhibit the same efficiency at the design point, i.e., the maximum power. Both vessels are equipped with two converters of the same size. The trade-off curves were generated by sampling the weight of the sailing time term in the objective from the set
\begin{equation*}
    \omega_T \in \{ 10, 3.33, 2, 1.25, 0.83\}.
\end{equation*}
Both trade-off curves exhibit increasing marginal fuel consumption for marginal reduction in sailing time. The low power configuration attains lower fuel consumption for any given voyage time, because the converters operate at higher efficiency closer to the design point. However, the fuel efficiency comes at the cost of maximum speed and minimum possible voyage time.

\subsection{Battery powered legs}
The hybrid power source of the vessel enables emission, noise and exhaust free operation on some subset of the voyage. This section implements two fully battery-powered voyage segments on path coordinate intervals $[0.2,0.4]$ and $[0.8,0.9]$. The requirement for battery charge sustainability for the complete voyage is preserved. Thus the investigation focuses on the optimal charging of the battery.

From Figure \ref{fig:zem_sol} it can be observed that the optimization algorithm ensures that the battery is charged with sufficient energy for the battery-powered legs while ensuring charge sustainability. In this case the battery discharging power limit does not impose a constraint on the speed of the battery-powered legs.

\section{Conclusion}
Modern marine transportation is moving in the direction of increased autonomy. Motivated by the need for fast and reliable decision making capability onboard, this work implements a physics-based convex optimization model for integrated hybrid power source supervisory control and dynamically feasible speed trajectory generation.
In the model formulation, the three-degrees-of-freedom vessel dynamics under a fixed pitch propeller model are convexified in the spatial domain by constraint relaxations and changes of variables.
The energy supply system consists of a fuel-to-electricity converter, e.g., a fuel cell or a generating set, and a battery to achieve local zero-emission and zero-noise operation. The whole energy system is represented by simplified loss models for each component.

The presented fixed pitch propeller model based on a physically reasonable second-order polynomial fitting of \gls{kt} and \gls{kq} is compared to other modeling variants, a linear fit and a \gls{poly3}.
The results indicate that the chosen approach yields sufficient accurate results around the most important operating points.
Reducing the error in the design point even further, e.g., by means of a Taylor expansion to second order about the propeller design point, could be explored in future work.
The analysis of 180 propellers of the Wageningen B-Series shows that the chosen model can be applied to a range of different propellers.

The velocity signal calculated by the planning algorithm is purely feedforward. A feedback controller needs to be implemented for tracking the signal. The optimal state and control signals respect the equations of motion of the vessel, i.e., they are dynamically feasible, which should leave only a minor tracking error for the feedback controller to clean up. Nevertheless, it is recommended to impose bounds on the control inputs which are lower than true allowable bounds. These conservative bounds account for modeling errors and prevent the feedback controller from becoming unstable \cite{versc_PracticalTimeOptimalTrajectory_2008}.

The generated trajectory - and the power demand prediction for the energy supply system - could be used in autonomous shipping as input for lower-level control.
This application requires the underlying optimization problem to be solvable reliably in real-time, which is achieved by the proposed convex optimization model.
In fact, it can be solved sufficiently fast to consider the whole voyage as the power demand prediction horizon if a specialized algorithm is implemented.
The special structure of the problem can be exploited to design custom algorithms \cite{lipp_MinimumtimeSpeedOptimisation_2014}.
Future research could focus on the implementation of this algorithm in combination with realizing the convex optimization model on a real-time platform to validate the simulations with data from a case study vessel.
Further developments of the presented optimization problem could cover a controllable pitch propeller model and the inclusion of additional resistance sources like shallow water, sea currents and the state of the sea depending on weather data.

\section*{Funding}
This research was funded by Business Finland’s Clean Propulsion Technologies project (ref. 38485/31/2020).

\section*{Acknowledgment}
The authors thank Dr. Janne Huotari for his contribution to the development of the  methodology presented in this manuscript.

\renewenvironment{theglossary}%
    {\begin{list}{}
        {%
        \setlength\labelwidth{0pt}%
        \setlength\itemindent{-\leftmargin}%
        \setlength\itemsep{0pt}%
        \setlength\parsep{4pt}%
        \let\makelabel\descriptionlabel}%
    }
    {\end{list}}
\printglossaries




\bibliographystyle{model1-num-names}

\bibliography{main.bib}

\appendix

\section{Thrust and torque coefficient fitting}

\subsection{Coefficients of the propulsive thrust function} \label{app:T_p_coeffs}
\begin{gather*}
    \glsuseriii{a_T} = a_{T,2}  \gls{rho_sw}  D_p^2  (1 - f_\text{w})^2, \\
    \glsuserii{a_T} = a_{T,1}  \gls{rho_sw}  D_p^3  (1 - f_\text{w}), \\
    \glsuseri{a_T} = a_{T,0}  \gls{rho_sw}  D_p^4.
\end{gather*}

\subsection{Coefficients of the propulsive torque function} \label{app:Q_p_coeffs}
\begin{gather*}
    \glsuseriii{a_Q} = a_{Q,2}  \gls{rho_sw}  D_p^3  (1 - f_\text{w})^2, \\
    \glsuserii{a_Q} = a_{Q,1}  \gls{rho_sw}  D_p^4  (1 - f_\text{w}), \\
    \glsuseri{a_Q} = a_{Q,0}  \gls{rho_sw}  D_p^5.
\end{gather*}

\subsection{Coefficients of the change in propulsive energy function} \label{app:coeff_dE_p}
\begin{gather*}
    \glsuseriii{k_dEp} = 2\pi  \gls{rho_sw}  D_p^3  a_{Q,2}  (1-f_\text{w})^2,\\
    \glsuserii{k_dEp} = 2\pi  \gls{rho_sw}  D_p^4  a_{Q,1}  (1-f_\text{w}),\\
    \glsuseri{k_dEp} = 2\pi  \gls{rho_sw}  D_p^5  a_{Q,0}.
\end{gather*}

\section{Convexity of reduced propulsive power}
\label{app:power_convexity}
Let $a_1=2\pi  a_\text{Q,0}  \gls{D_p}[^5]$, $a_2=2\pi  \sqrt{\gls{s}'_{12}}  a_{Q,2}  \gls{D_p}[^3] $, $x_1 = \gls{n_p}$ and $x_2 = \gls{b}$. Using this notation, we define the function $f$ as
\begin{equation*}
    f(x_1, x_2)=a_1  \frac{x_1^2}{\sqrt{x_1  x_2}} - a_2\sqrt{x_1  x_2}, \quad (a_1,a_2) \in \mathbb{R}_{++}.
\end{equation*}
Convexity of $f$ for $(x_1,x_2) \in \mathbb{R}_{++}$ is shown as follows.
Let $y_1 \in \mathbb{R}, y_2 >0$ and $g(y_1,y_2)=y_1^2/y_2$.
The function $g$ is a standard quadratic over linear convex function that is increasing in $y_1$ for $y_1\geq0$ and decreasing in $y_2$. Let $h_1(x_1)=x_1$ and $h_2(x_1,x_2)=\sqrt{x_1  x_2}$. The function $h_1$ is linear, i.e., both convex and concave, and $h_2$ is a standard geometric mean concave function.
The function $f$ can be expressed as 
\begin{equation*}
f(x_1,x_2)=a_1  g\left(h_1(x_1),h_2(x_1,x_2)\right) - a_2  h_2(x_1,x_2).
\end{equation*}
According to the general composition theorem, $g\left(h_1(),h_2()\right)$ is a convexity preserving operation when $g$ is convex, $g$ is increasing in the first argument, $h_1$ convex and $g$ is decreasing in the second argument, and $h_2$ is concave \cite{boyd_ConvexOptimization_2004}.
The negative of a concave function is convex, so the composition $-h_2$ is convex.
Positive scalar multiplication and addition of convex functions are convexity-preserving operations.
Therefore, $f$ is convex.

\section{Augmented Hamiltonian}
\label{app:hamiltonian}
The augmented Hamiltonian of the problem is given by
\begin{equation}
\begin{split}
    H() = &\left( k_{\mathrm{c}}  \glsuseri{a_c} + \gls{w_t} \right)  \gls{y_t} + k_{\mathrm{c}}  \glsuserii{a_c}  \gls{F_c} +\\
    & \begin{bmatrix} \psi_x \\ \psi_y \\ \psi_\theta \end{bmatrix}^\top 2 \textbf{diag}(s')^{-1} \left( M^{-1}R \begin{bmatrix}
    \gls{T_p} - \gls{F_D} \\
    \gls{F_H} + \gls{F_P} - \gls{F_R} \\
    \gls{F_H} + \gls{F_P} - \gls{F_R} \\
    \end{bmatrix}  -s''b \right) \\
    &\lMultIneq{\gls{F_H}_{,1}}  \left( \gls{F_H} - \frac{\gls{rho_sw}}{2}  S  C_\mathrm{L,max}  \gls{s}'_{12}  \gls{b} \right) -\\
    &\lMultIneq{\gls{F_H}_{,2}}  \left( \gls{F_H} + \frac{\gls{rho_sw}}{2}  S  C_\mathrm{L,max}  \gls{s}'_{12}  \gls{b} \right) +\\
    &\lMultIneq{\gls{F_D}}  \left( \frac{\gls{rho_sw}}{2}  (C_{\mathrm{F}} + C_{\mathrm{R}})  A_{\mathrm{s}}  \gls{s}'_{12}  \gls{b} + \right.\\
    &\quad \left. \frac{2  \gls{F_H}^2  A_{\mathrm{s}}}{\gls{rho_sw}  \pi  \Omega  S^2  \gls{s}'_{12}  \gls{b}} - \gls{F_D} \right) +\\
    &\lMultIneq{\gls{F_R}_{,1}}  \left(\gls{F_R} - \frac{\gls{rho_sw}}{2}  C_\text{K}\left(\omega_\text{max} \right)  A_\text{R}  k_\text{tm}^2  \right.\\
    &\quad \left. \left( (1 - \gls{f_w})^2  \gls{s}'_{12}  \gls{b} + \frac{\gls{T_p}}{\frac{\gls{rho_sw}}{2}  \frac{\pi}{4}  \gls{D_p}[^2]} \right) \right)-\\
    &\lMultIneq{\gls{F_R}_{,2}}  \left(\gls{F_R} + \frac{\gls{rho_sw}}{2}  C_\text{K}\left(\omega_\text{max} \right)  A_\text{R}  k_\text{tm}^2  \right.\\
    &\quad \left. \left( (1 - \gls{f_w})^2  \gls{s}'_{12}  \gls{b} + \frac{\gls{T_p}}{\frac{\gls{rho_sw}}{2}  \frac{\pi}{4}  \gls{D_p}[^2]} \right) \right)+\\
    &\lMultIneq{\gls{z}}  \left( \gls{z} - \sqrt{\glsuseriv{s}_{12}^2  \gls{b}  \gls{n_p}} \right) +\\
    &\lMultIneq{ESS}  \left( k_\text{p}  \frac{\gls{F_dE_p}}{\tilde{\eta}_{\mathrm{EM}}} + \gls{P_aux}  \gls{y_t} + \gls{F_batd} - \right.\\
    &\quad \left. \gls{k_c}  \gls{F_c} - \gls{F_bat}  \eta_{\mathrm{DC/DC,batt}}\right)+\\
    &\lMultIneq{n_\text{EM,max}}  \left( \gls{n_p} - \left( \frac{n_\text{EM,max}}{i_\text{g}} \right)^2 \right) +\\
    &\lMultIneq{\gls{Q_p}}  \left( \gls{Q_p} - Q_{\mathrm{EM,max}}  \eta_\text{g}  i_\text{g} \right) +\\
    &\lMultIneq{F_\text{EM,max}}  \left( \gls{F_em} - F_\text{EM,max} \right) +\\
    &\lMultIneq{\gls{F_c}_\text{,min}}  \left( -\gls{F_c} \right) +\\
    &\lMultIneq{\gls{F_c}_\text{,max}}  \left(  \gls{F_c} - \frac{P_\text{c,max}  \sqrt{\gls{s}'_{12}}}{2  \gls{v_s}_\text{,r}}  \left( 3 - \frac{\gls{s}'_{12}  \gls{b}}{\gls{v_s}[^2]_\text{,r}}\right) \right) +\\
    &\lMultIneq{\gls{y_t}}  \left( \frac{1}{\sqrt{\gls{b}}} - \gls{y_t} \right) +\\
    &\lMultDiff{\gls{dE_bat}}  \left( -\gls{F_bat} \right) +\\
    &\lMultIneq{\gls{dE_bat}_\text{min}}  \left( \gls{dE_bat}_{\mathrm{,min}} - \gls{dE_bat} \right) +\\
    &\lMultIneq{\gls{dE_bat}_\text{max}}  \left( \gls{dE_bat} - \gls{dE_bat}_{\mathrm{,max}} \right) +\\
    &\lMultIneq{F_\text{cha, max}}  \left( -F_\text{cha,max} - \gls{F_bat} \right) +\\
    &\lMultIneq{F_\text{dis, max}}  \left( \gls{F_bat} - F_\text{dis,max} \right).
\end{split}
\end{equation}

\section{Derivation of dynamics in the spatial domain} \label{app:dynamics_spatial}
First, we rewrite the second time derivative of the configuration vector $\Ddot{\gls{q}}$ in (\ref{eq:time_dynamics}) using the path coordinate \gls{sigma} and the fixed path $s$. The first time derivative is
\begin{equation}
    \dot{\gls{q}}(t) = \frac{dq(t)}{dt}=\frac{ds(\gls{sigma}(t))}{dt}=\frac{ds(\gls{sigma}(t))}{d\gls{sigma}(t)} \frac{d \gls{sigma}(t)}{dt}
\end{equation}
where the last expression is obtained by applying the chain rule from calculus.
Using the shorthand notation $'$ to represent derivatives with respect to \gls{sigma}, the result above can be expressed concisely as
\begin{equation} \label{eq:q_dot_aux}
    \dot{\gls{q}}(t)=s'(\gls{sigma}(t))\dot{\gls{sigma}}(t)\,.
\end{equation}

The second derivative is (we drop the time dependency of $\gls{sigma}(t)$ and path dependency of $\gls{s}(\gls{sigma})$ for clarity)
\begin{equation}  \label{eq:2nd_derivative}
\begin{aligned}
    \ddot{\gls{q}}(t) & = \frac{d}{dt}\left( \frac{ds}{d\gls{sigma}} \frac{d \gls{sigma}}{dt} \right) = \frac{d}{dt}\frac{ds}{d\gls{sigma}}\frac{d\gls{sigma}}{dt}+\frac{ds}{d\gls{sigma}}\frac{d^2\gls{sigma}}{dt^2} \\
    & = \frac{d}{d\gls{sigma}}\left( \frac{ds}{d\gls{sigma}}\frac{d\gls{sigma}}{dt} \right) \frac{d\gls{sigma}}{dt} + \frac{ds}{d\gls{sigma}}\frac{d^2\gls{sigma}}{dt^2} \\
    & = \frac{d^2s}{d\gls{sigma}^2}\left( \frac{d\gls{sigma}}{dt} \right)^2 + \frac{ds}{d\gls{sigma}}\frac{d^2\gls{sigma}}{dt^2} \\ 
    &  = \gls{s}''(\gls{sigma}(t))\dot{\gls{sigma}}(t)^2 + \gls{s}'(\gls{sigma}(t))\ddot{\gls{sigma}}(t)
\end{aligned}
\end{equation}
where the third expression is obtained by applying product rule form calculus. The result \eqref{eq:2nd_derivative} is the same as the one derived in \cite{lipp_MinimumtimeSpeedOptimisation_2014}.

By applying (\ref{eq:2nd_derivative}) and (\ref{eq:path_definition}) to the dynamics (\ref{eq:time_dynamics}), we obtain a representation with respect to \gls{sigma}:
\begin{equation} \label{eq:dynamics_sigma}
    R(\gls{s}(\gls{sigma}))u = M(\gls{s}''(\gls{sigma})\dot{\gls{sigma}}^2 + \gls{s}'(\gls{sigma})\ddot{\gls{sigma}}).
\end{equation}
Here, $\dot{\gls{sigma}}^2$ is the square of the speed of the vessel along the path, and $\ddot{\gls{sigma}}$ is the acceleration along the path. Note that the terms $\gls{s}''(\gls{sigma})$ and $\gls{s}'(\gls{sigma})$ are obtained directly from the definition of the path. 

The nonlinear squared speed term in \eqref{eq:dynamics_sigma} renders the equality constraints non-convex. We will now introduce a new function that transforms (\ref{eq:dynamics_sigma}) to a convex form in a lossless manner.
Let
\begin{equation} \label{eq:b_defition}
    b(\gls{sigma}) = \dot{\gls{sigma}}^2.
\end{equation}
We observe that 
\begin{equation} \label{eq:dotb1}
    \dot{b}(\gls{sigma}) = \frac{db(\gls{sigma})}{dt}=\frac{db(\gls{sigma})}{d\gls{sigma}}\frac{d\gls{sigma}}{dt}=b'(\gls{sigma})\dot{\gls{sigma}}.
\end{equation}
Also directly from the definition \eqref{eq:b_defition} follows that
\begin{equation} \label{eq:dotb2}
    \dot{b}(\gls{sigma}) = \frac{d(\dot{\gls{sigma}})^2}{dt}=\frac{d}{dt} \left( \frac{d\gls{sigma}}{dt} \frac{d\gls{sigma}}{dt} \right) = \frac{d^2\gls{sigma}}{dt^2}\frac{d\gls{sigma}}{dt} + \frac{d^2\gls{sigma}}{dt^2}\frac{d\gls{sigma}}{dt} = 2 \ddot{\gls{sigma}}\dot{\gls{sigma}}.
\end{equation}
From the equivalency of \eqref{eq:dotb1} and \eqref{eq:dotb2} follows the relation
\begin{equation*}
    b'=2\ddot{\gls{sigma}}.
\end{equation*}
Since the derivative is a linear operator, the dynamics constraint expressed as
\begin{equation*}
    2R(\gls{s}(\gls{sigma}))u = 2M \left( \gls{s}''(\gls{sigma})b(\gls{sigma}) + \gls{s}'(\gls{sigma})b'(\gls{sigma}) \right)
\end{equation*}
is affine in the decision variables $b$ and $u$.

\printcredits



\end{document}